\sloppy\allowdisplaybreaks[4]
   \def\cA{{\cal A}}  
     \def\tX{\ti X}
     \def\tZ{\ti Z}
\def\dbE{\mathbb{E}}     \def\tJ{\ti J}
\def\dbF{\mathbb{F}} \def\sF{\mathscr{F}}    
\def\dbG{\mathbb{G}} \def\sG{\mathscr{G}}  
\def\dbH{\mathbb{H}}   
  \def\hJ{\hat{J}}
 \def\sN{\mathscr{N}}  
\def\dbP{\mathbb{P}}   
\def\dbR{\mathbb{R}}   
\def\dbS{\mathbb{S}}   
   \def\cU{{\cal U}}
  \def\hX{\hat{X}} 
 \def\sY{\mathscr{Y}}  
 \def\sZ{\mathscr{Z}} \def\hZ{\hat{Z}} 
   \def\lt{\left}          \def\hb{\hbox}
\def\ms{\medskip}     \def\rt{\right}         
\def\h{\hat}          \def\lan{\langle}       \def\as{\text{a.s.}}
\def\q{\quad}         \def\ran{\rangle}       \def\tr{\hb{tr$\,$}}
\def\qq{\qquad}             
\def\no{\noindent}          
\def\hp{\hphantom}         
\def\nn{\nonumber}         
\def\rf{\eqref}       \def\Blan{\Big\lan\!\!} 
\def\cd{\cdot}        \def\Bran{\!\!\Big\ran} 
\def\deq{\triangleq}  \def\({\Big(}           
\def\ti{\tilde}       \def\){\Big)}           
   \def\[{\Big[}           \def\ts{\textstyle}
  \def\]{\Big]}           
\def\nid{\,|\,}       
\def\bid{\,\big|\,}   
\def\Bid{\,\Big|\,}
       \def\l{\lambda}    \def\D{\varDelta}
        \def\t{\tau}       
\def\d{\delta}       \def\th{\theta}    
      \def\L{\varLambda}
\def\f{\varphi}             \def\Om{\varOmega}
       \def\si{\sigma}    \def\Si{\varSigma}
\def\i{\infty}       \def\z{\zeta}      \def\Th{\Theta}
\newtheoremstyle{thry}
{}      
{}      
{\sl}   
{}      
{\bf}   
{.}     
{.5em}  
{}      
\theoremstyle{thry}
\newtheorem{theorem}{Theorem}[section]
\newtheorem{proposition}[theorem]{Proposition}
\newtheorem{corollary}[theorem]{Corollary}
\newtheorem{lemma}[theorem]{Lemma}
\theoremstyle{definition}
\newtheorem{notation}[theorem]{Notation}
\theoremstyle{remark}
\def\punct{}
\newtheoremstyle{dotless}{}{}{\rm}{}{\bf}{\punct}{.5em}{}
\theoremstyle{dotless}
\newenvironment{taggedthm}[1]
 {\taggedthmx}
 {\endtaggedthmx}
\newenvironment{taggedassumption}[1]
 {\taggedassumptionx}
 {\endtaggedassumptionx}
   \newcommand{\setword}[2]{%
   \phantomsection
   #1\def\@currentlabel{\unexpanded{#1}}\label{#2}%
   }
\begin{document} 

\title{\bf Stochastic Linear-Quadratic Optimal Control with Partial Observation}
\author{Jingrui Sun\thanks{Department of Mathematics, Southern University of Science and Technology, Shenzhen,
                           518055, China (Email: {\tt sunjr@sustech.edu.cn}).
                           This author is supported by NSFC grant 11901280 and Guangdong Basic and Applied Basic Research
                           Foundation 2021A1515010031.}
~~~and~~
Jie Xiong\thanks{Department of Mathematics and SUSTech International center for Mathematics, Southern University
                 of Science and Technology, Shenzhen, 518055, China (Email: {\tt xiongj@sustech.edu.cn}).
                 This author is supported by NSFC Grants 61873325 and 11831010.}
}

\maketitle

\no{\bf Abstract.}
The paper studies a class of quadratic optimal control problems for partially observable linear dynamical systems.
In contrast to the full information case, the control is required to be adapted to the filtration
generated by the observation system, which in turn is influenced by the control. 
The variation method fails in this case due to the fact that the filtration is not fixed.
To overcome the difficulty, we use the orthogonal decomposition of the state process to write the cost functional
as the sum of two parts: one is a functional of the control and the filtering process and the other part is
independent of the choice of the control.
The first part possesses a mathematical structure similar to the full information problem.
By completing the square, it is shown that the optimal control is given by a feedback representation via the filtering process.
The optimal value is also obtained explicitly.

\ms
\no{\bf Key words.}
optimal control, linear-quadratic, observation process, filtering, Riccati equation.

\ms
\no{\bf AMS 2020 Mathematics Subject Classification.}
49N10, 49N30, 93E11, 93E20.

\section{Introduction}\label{Sec:Intro}

Let $(\Om,\sF,\dbP)$ be a complete probability space on which two standard independent Brownian motions
$W=\{W(t)=(W_1(t),\ldots,W_d(t))^\top;\,t\ge0\}$ and $W'=\{W'(t)=(W'_1(t),\ldots,W'_k(t))^\top;\,t\ge0\}$,
with values in $\dbR^d$ and $\dbR^k$, respectively, are defined.
The superscript $\top$ denotes the transpose of a vector or a matrix, so both $W(t)$ and $W'(t)$ are
column vectors.
Let $\dbF=\{\sF_t\}$ be the usual augmentation of the natural filtration generated by $(W,W')$,
and let $\dbG=\{\sG_t\}$ be a sub-filtration of $\dbF$.
For a random variable $\xi$, we write $\xi\in\sG_t$ if $\xi$ is $\sG_t$-measurable;
and for a stochastic process $\f$, we write $\f\in\dbG$ if it is progressively measurable with respect
to the filtration $\dbG$.


\ms

Consider the following linear stochastic differential equation (SDE) over a finite horizon $[0,T]$:
\begin{equation}\label{state}\left\{\begin{aligned}
dX(t) &= [A(t)X(t) + B(t)u(t) +a(t)]dt + C(t)dW(t) + D(t)dW'(t), \\
 X(0) &= x,
\end{aligned}\right.\end{equation}
where the {\it initial state} $x$ is a constant vector in $\dbR^n$, and the coefficients $A$, $B$, $a$, $C$, and $D$
are deterministic, bounded functions on $[0,T]$, with values in $\dbR^{n\times n}$, $\dbR^{n\times m}$, $\dbR^n$, $\dbR^{n\times d}$,
and $\dbR^{n\times k}$, respectively. For a {\it control} $u$ that belongs to the space
\begin{align}\label{def-cU}
\ts \cU=\Big\{ u:[0,T]\times\Om\to\dbR^m \bigm| u\in\dbF~\text{and}~\dbE\int_0^T|u(t)|^2dt<\i \Big\},
\end{align}
the {\it state equation} \rf{state} admits a unique strong solution $X=\{X(t);\,0\le t\le T\}$,
which is a square-integrable $\{\sF_t\}$-semimartingale satisfying
\begin{equation}\label{Bound:XY}
\dbE\lt[\sup_{0\le t\le T}|X(t)|^2\rt]<\i.
\end{equation}
The classical {\it linear-quadratic (LQ) optimal control problem} is to find a control $u^*\in\cU$ that minimizes the
quadratic {\it cost functional}
\begin{align}\label{cost}
J(x;u) &= \dbE\Big\{\lan GX(T),X(T)\ran + 2\lan g,X(T)\ran  \nn\\
&\hp{=\ } +\int_0^T\bigg[\Blan\begin{pmatrix*}[l]Q(t) & \!S(t)^\top \\ S(t) & \!R(t)\end{pmatrix*}\!
                              \begin{pmatrix}X(t) \\ u(t)\end{pmatrix}\!,
                              \begin{pmatrix}X(t) \\ u(t)\end{pmatrix}\Bran  \nn\\
&\hp{=\ } +2\Blan\begin{pmatrix}q(t) \\ r(t)\end{pmatrix}\!,
                 \begin{pmatrix}X(t) \\ u(t)\end{pmatrix}\Bran\bigg] dt\bigg\}
\end{align}
over $\cU$, subject to the state equation \rf{state}.
In \rf{cost}, $\lan\cd\,,\cd\ran$ is the Frobenius inner product;
$G$ is a symmetric $n\times n$ constant matrix; $g\in\dbR^n$ is a constant vector;
$Q,S,R$ are bounded deterministic matrix-valued functions of proper dimensions over $[0,T]$
such that the blocked matrix in the Lebesgue integral is symmetric;
$q,r$ are bounded deterministic functions over $[0,T]$, with values in $\dbR^n$ and $\dbR^m$, respectively.

\ms

The above LQ problem can be elegantly solved by a Riccati equation approach, whose optimal control turns out
to be a linear feedback of the current state (see, for example, \cite{Yong-Zhou1999,Sun-Yong2020}).
Thus, the optimal control could be constructed if the controller has access to the exact value of the state.
However, in many practical situations it often happens that some components of the state may be inaccessible
for observation, and there could be noise existing in the observation systems.
In this case only partial information is available to the controller, and the control has to be selected according
to the information provided by the observation systems.

\ms

In this paper we consider the case that the observation process evolves according to the following SDE:
\begin{equation}\label{SDE:observer}\left\{\begin{aligned}
dY(t) &= [H(t)X(t)+h(t)]dt + K(t)dW(t), \q 0\le t\le T,\\
 Y(0) &= 0,
\end{aligned}\right.\end{equation}
where $H$, $h$, and $K$ are deterministic, bounded functions on $[0,T]$, with values in $\dbR^{d\times n}$, $\dbR^d$,
and $\dbR^{d\times d}$, respectively.
For each control $u\in\cU$, the observation process $Y=\{Y(t);\,0\le t\le T\}$ is a square-integrable
$\{\sF_t\}$-semimartingale.
Let $\sY^u=\{\sY^u_t\}_{0\le t\le T}$ be the usual augmentation of the filtration generated by $Y$.
Clearly, the filtration $\sY^u$ depends on the choice of $u$.
We say that a control $u\in\cU$ is {\it admissible} if it is progressively measurable with respect
to $\sY^u$; that is, the set of admissible controls is
\begin{align}\label{def-Uad}
\ts \cU_{ad}=\Big\{ u:[0,T]\times\Om\to\dbR^m \bigm| u\in\sY^u~\hb{and}~\dbE\int_0^T|u(t)|^2dt<\i \Big\}.
\end{align}
At each time $t$, the controller first observes $Y(t)$, then applies $u(t)$ to the state system \rf{state} immediately.
The objective of our control problem is to choose an admissible control such that the cost functional \rf{cost} is minimized.

\begin{taggedthm}{Problem (O).}
For a given initial state $x\in\dbR^n$, find a control $v\in\sY^v$ such that
\begin{align*}
J(x;v)=\inf_{u\in\sY^u}J(x;u) \equiv V(x).
\end{align*}
\end{taggedthm}

The process $v$, if exists, is called an {\it optimal control} for the initial state $x$,
and the function $V$ is called the {\it value function} of Problem (O).

\ms

In contrast to the completely observable case, the essential difficulty is that the filtration $\sY^u$ is not fixed
(depending on the control $u$) and the linear structure of the admissible control set is thereby corrupted.
For this reason it is hard to derive the optimality system by the variational method.
In fact, for a given control $u\in\cU$, it is not even easy to decide whether it is admissible or not.
To overcome this difficulty, one way is to apply the separation principle to decouple the problems of
optimal control and state estimation.
This method is based on an additional requirement on the admissible control, i.e., $u\in\cU$ is said to be admissible
if, in addition to $u\in\sY^u$, $u$ is adapted to the smaller filtration $\sY^0$; see, for example,
Wonham \cite{Wonham1968} and Bensoussan \cite{Bensoussan2018}.
Along this line, many research on LQ optimal control of partially observable systems were carried out in recent years,
among which we would like to mention the works \cite{Huang-Wang-Xiong2009,Huang-Wang-Zhang2020,Wang-Wang-Yan2021}
on backward stochastic control systems under partial information,
the works \cite{Shi-Wu2010,Shi-Zhu2013,Wang-Wu-Xiong2015} on optimal control problems of forward-backward stochastic differential
equations (FBSDEs) with partial information,
the works \cite{Wang-Yu2012,Wu-Zhuang2018} on differential games with partially observable systems.
Another way of analyzing the problem is to convert it into a completely observable stochastic optimal
control problem by Girsanov's transformation (see, for example, Wang, Wu, and Xiong \cite{Wang-Wu-Xiong2013},
Yong and Zhou \cite[Chapter 2]{Yong-Zhou1999}, or the references therein).
However, this approach turns the problem into an infinite-dimensional one that is also difficult to solve
and requires higher integrability of the state process which cannot be fulfilled in the LQ case in general.

\ms

In this paper we introduce a new method of solving Problem (O) {\it without imposing additional requirements on the admissible control}.
The idea is to first derive the filtering equation for a fixed admissible control and then use the orthogonal decomposition of
the state process to write the cost functional as the sum of two independent parts.
One part is a functional of the admissible control and the filtering process,
and the other part is a functional of the estimate error that is independent of the choice of the admissible control.
The first part possesses a mathematical structure similar to the full information problem, which we solve by completing the square.
The second part is simplified using integration by parts.
Finally, we show by verification that the optimal control is given by a feedback representation via the filtering process.
We also obtain the optimal value explicitly.

\ms

The rest of the paper is organized as follows.
In \autoref{Sec:Pre} we collect some notation and preliminary results that we need later.
Then we present the main results of the paper in \autoref{sec:main-results}.
The subsequent two sections are devoted to the proofs of the main results.
We derive the filtering equation for a fixed admissible control in \autoref{sec:filter}
and construct the optimal control in \autoref{sec:optimal-u}.
Finally, we give some concluding remarks in \autoref{sec:conclusion}.

\section{Preliminaries}\label{Sec:Pre}

We begin by introducing some notation.
All vectors in the paper are column vectors, and the transpose is denoted by the superscript $\top$.
The Euclidean space $\dbR^{n\times m}$ of $n\times m$ real matrices is equipped with the Frobenius inner product
$$ \lan M,N\ran=\tr(M^\top N), \q M,N\in\dbR^{n\times m}, $$
where $\tr(M^\top N)$ is the trace of $M^\top N$.
The norm induced by the Frobenius inner product is denoted by $|\cd|$.
The identity matrix of size $n$ is denoted by $I_n$.
For a subset $\dbH$ of $\dbR^{n\times m}$, we denote by $C([0,T];\dbH)$ the space of continuous functions
from $[0,T]$ into $\dbH$, and by $L^\i(0,T;\dbH)$ the space of Lebesgue measurable, essentially bounded functions
from $[0,T]$ into $\dbH$.
Let $\dbS^n$ be the subspace of $\dbR^{n\times n}$ consisting of symmetric matrices.
For $\dbS^n$-valued functions $M$ and $N$, we write $M\ge N$ (respectively, $M>N$) if $M-N$ is positive
semidefinite (respectively, positive definite) almost everywhere with respect to the Lebesgue measure.

\ms

Throughout this paper we impose the following assumptions.

\begin{taggedassumption}{(A1)}\label{A1}
The deterministic functions
\begin{align*}
& A:[0,T]\to\dbR^{n\times n}, \q B:[0,T]\to\dbR^{n\times m}, \q a:[0,T]\to\dbR^n,  \q C:[0,T]\to\dbR^{n\times d},   \\
& D:[0,T]\to\dbR^{n\times k}, \q H:[0,T]\to\dbR^{d\times n}, \q h:[0,T]\to\dbR^d,  \q K:[0,T]\to\dbR^{d\times d}
\end{align*}
are Lebesgue measurable and bounded on $[0,T]$.
\end{taggedassumption}
\begin{taggedassumption}{(A2)}\label{A2}
$K(t)$ is invertible for a.e. $t\in[0,T]$, and $K^{-1}$ is bounded on $[0,T]$.
\end{taggedassumption}
\begin{taggedassumption}{(A3)}\label{A3}
$G\in\dbS^n$, $g\in\dbR^n$, and the deterministic functions
$$ Q:[0,T]\to\dbS^n, ~ S:[0,T]\to\dbR^{m\times n}, ~ R:[0,T]\to\dbS^m, ~  q:[0,T]\to\dbR^n, ~ r:[0,T]\to\dbR^m $$
are Lebesgue measurable and bounded on $[0,T]$, satisfying
$$ G\ge 0, \q R(t)\ge\d I_m, \q Q(t)-S(t)^\top R(t)^{-1}S(t)\ge0, \q \as~t\in[0,T], $$
where $\d>0$ is some constant.
\end{taggedassumption}

The following result is standard in the LQ theory. For a quick proof, see Yong and Zhou \cite[pp. 297--298]{Yong-Zhou1999}.

\begin{lemma}\label{lmm:LQ-theory}
Let \ref{A1} and \ref{A3} hold. Then the Riccati equation
\begin{equation}\label{Ric:P}\left\{\begin{aligned}
& \dot P(t)+P(t)A(t)+A(t)^\top P(t) + Q(t) \\
&\hp{\dot P(t)} -[P(t)B(t)+S(t)^\top]R(t)^{-1}[B(t)^\top P(t)+S(t)]=0,  \\
& P(T)=G
\end{aligned}\right.\end{equation}
admits a unique solution $P\in C([0,T];\dbS^n)$, which is positive semidefinite everywhere on $[0,T]$.
\end{lemma}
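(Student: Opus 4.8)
The plan is to recast \eqref{Ric:P} in the standard symmetric form, produce a unique local solution by Carath\'eodory theory, and then rule out finite-time blow-up through two-sided a priori bounds that at the same time yield the asserted positive semidefiniteness. First I would absorb the cross term by writing $\hat A=A-BR^{-1}S$ and $\hat Q=Q-S^\top R^{-1}S$; a direct expansion of $(PB+S^\top)R^{-1}(B^\top P+S)$ then rewrites \eqref{Ric:P} as
\begin{equation*}
\dot P+P\hat A+\hat A^\top P+\hat Q-PBR^{-1}B^\top P=0,\qquad P(T)=G.
\end{equation*}
By \ref{A3} we have $\hat Q\ge0$, $G\ge0$, and $R\ge\d I_m$, so that $BR^{-1}B^\top\ge0$. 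Since the right-hand side of this ODE is a quadratic polynomial in $P$ with bounded, Lebesgue measurable coefficients, it is Carath\'eodory and locally Lipschitz in $P$; the Carath\'eodory existence and uniqueness theorem therefore yields a unique absolutely continuous solution on a maximal subinterval $(\t,T]\subseteq[0,T]$. Transposing the equation and invoking uniqueness (the data $\hat Q$, $R$, $G$ being symmetric) shows $P=P^\top$, so $P$ is $\dbS^n$-valued on $(\t,T]$.

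The crux is to continue this local solution to all of $[0,T]$, for which I would establish a two-sided bound valid on $(\t,T]$. For the lower bound, note that on the existence interval the feedback $\Theta=R^{-1}B^\top P$ is well defined and bounded measurable, and, since $\Theta^\top R\Theta=PBR^{-1}B^\top P$, the function $P$ solves the \emph{linear} Lyapunov equation
\begin{equation*}
\dot P+P(\hat A-B\Theta)+(\hat A-B\Theta)^\top P+\hat Q+\Theta^\top R\Theta=0,\qquad P(T)=G.
\end{equation*}
Representing its solution through the transition matrix $\Phi(\cd\,,\cd)$ of $\dot x=(\hat A-B\Theta)x$ gives
\begin{equation*}
P(s)=\Phi(T,s)^\top G\,\Phi(T,s)+\int_s^T\Phi(t,s)^\top\big(\hat Q(t)+\Theta(t)^\top R(t)\Theta(t)\big)\Phi(t,s)\,dt,
\end{equation*}
a sum of positive semidefinite matrices because $G\ge0$, $\hat Q\ge0$, and $\Theta^\top R\Theta\ge0$; hence $P(s)\ge0$ on $(\t,T]$.

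For the upper bound, let $\bar P$ denote the solution of the linear equation obtained by discarding the quadratic term, $\dot{\bar P}+\bar P\hat A+\hat A^\top\bar P+\hat Q=0$ with $\bar P(T)=G$, which exists and is bounded on all of $[0,T]$. The difference $\Delta=\bar P-P$ then satisfies $\dot\Delta+\Delta\hat A+\hat A^\top\Delta+PBR^{-1}B^\top P=0$ with $\Delta(T)=0$, and the analogous representation through the transition matrix $\Psi(\cd\,,\cd)$ of $\dot x=\hat Ax$ expresses $\Delta(s)$ as an integral of the positive semidefinite matrices $\Psi(t,s)^\top P(t)B(t)R(t)^{-1}B(t)^\top P(t)\Psi(t,s)$; thus $\Delta\ge0$, i.e. $0\le P(s)\le\bar P(s)$ on $(\t,T]$.

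Finally, the bound $0\le P(s)\le\bar P(s)$ holds uniformly on $(\t,T]$ with $\bar P$ bounded independently of $s$; since a Carath\'eodory solution can be continued as long as it remains bounded, this precludes blow-up and forces $\t=0$, so $P$ exists on the whole of $[0,T]$. Positive semidefiniteness on $[0,T]$ and continuity ($P\in C([0,T];\dbS^n)$) are then inherited from the bounds and the absolute continuity, while global uniqueness follows from the local uniqueness already used. I expect the main obstacle to be precisely this non-blow-up step: unlike a generic matrix Riccati equation, here the dissipative term $-PBR^{-1}B^\top P$, together with the sign conditions in \ref{A3}, is exactly what traps $P$ between $0$ and the linear comparison solution $\bar P$, and turning this heuristic confinement into the rigorous transition-matrix representations above is where the real work lies.
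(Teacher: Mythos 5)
Your proof is correct. Note first that the paper does not actually prove \autoref{lmm:LQ-theory}: it calls the result standard and refers to Yong and Zhou \cite{Yong-Zhou1999} for a quick proof, so there is no in-text argument to compare against step by step. Your argument is a correct, self-contained rendition of the standard result, and it proceeds at the purely ODE level: eliminate the cross term through $\hat A=A-BR^{-1}S$, $\hat Q=Q-S^\top R^{-1}S$ (legitimate because \ref{A3} gives $R\ge\delta I_m$, hence $R^{-1}$ bounded), obtain a unique local, symmetric, absolutely continuous solution from Carath\'eodory theory, and preclude blow-up by trapping $P$ between $0$ and the solution $\bar P$ of the linear Lyapunov equation, both bounds coming from transition-matrix representations. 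The classical textbook route behind the citation derives the same two-sided bound control-theoretically: by completion of squares, $\lan P(s)x,x\ran$ is the optimal cost of the LQ problem on $[s,T]$, hence nonnegative and dominated by the cost of the zero control. Your version buys independence from any control problem and works directly under measurable-coefficient hypotheses; the control-theoretic version buys brevity. Two phrasing points, neither a real gap: (i) before the bounds are established you call $\Theta=R^{-1}B^\top P$ ``bounded measurable'' on the maximal interval $(\tau,T]$, but a priori it is only bounded on each compact subinterval $[s,T]\subset(\tau,T]$, since $P$ might still blow up at $\tau$ at that stage; that is all your representation formula needs, because it is applied for each fixed $s$, but it should be stated that way. (ii) Concluding $\tau=0$ is not quite the end: one must also extend $P$ to the closed interval $[0,T]$, which follows because $0\le P\le\bar P$ keeps $P$, hence $\dot P$ a.e., bounded near $0$, so $P$ is Lipschitz there and extends continuously to a solution of the integral equation on all of $[0,T]$; your final sentence gestures at this, and it deserves one explicit line.
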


For a process $Z=\{Z(t);\,0\le t\le T\}$, let
\begin{eqnarray*}
& \sZ_t^\circ \deq \si(Z(s);\, 0\le s\le t), \q 0\le t\le T, \\
& \sN^{_Z} \deq \{N\subseteq \Om;\, \exists\, G \in \sZ_T^\circ \text{ with } N\subseteq G \text{ and } \dbP(G)=0 \},
\end{eqnarray*}
and define
$$  \sZ_T\deq \si\big(\sZ_T^\circ,\sN^{_Z}\big),
\q \sZ_t \deq \bigcap_{s>t}\si\big(\sZ_s^\circ,\sN^{_Z}\big) = \si\big(\sZ_{t+}^\circ,\sN^{_Z}\big), \q 0\le t<T. $$
The filtered space $(\Om,\{\sZ_t\},\sZ_T,\dbP)$ satisfies the usual conditions.
We call $\{\sZ_t\}$ the {\it usual augmentation} of $\{\sZ_t^\circ\}$, the filtration generated by $Z$.

\ms

Let $\dbG=\{\sG_t\}_{t\ge0}$ be a sub-filtration of $\dbF=\{\sF_t\}$, which also satisfies the usual conditions.
Consider an integrable process $Z=\{Z(t);\,0\le t\le T\}$.
The conditional expectation
$$ \hZ(t) = \dbE[Z(t)\nid \sG_t] $$
defines $\hZ(t)$ up to a zero probability set but does not give us the paths of $\hZ$, which requires specifying
its values simultaneously at the uncountable set of times in $[0,T]$.
We need a good version for $\hZ$ that is at least progressively measurable with respect to $\{\sG_t\}$.
Such a version is referred to as the $\{\sG_t\}$-{\it optional projection} of $Z$, which exists under various conditions.
For our purpose, we shall focus on the case that $Z=\{Z(t);\,0\le t\le T\}$ is a continuous, square-integrable
$\{\sF_t\}$-semimartingale. We have the following result.

\begin{proposition}
Let $Z=\{Z(t);\,0\le t\le T\}$ be a continuous, square-integrable $\{\sF_t\}$-semimartingale.
Then there exists a $\{\sG_t\}$-adapted process $\hZ=\{\hZ(t);\,0\le t\le T\}$ whose sample paths are right-continuous
with finite left-hand limits (RCLL), such that
$$ \hZ(t) = \dbE[Z(t)\nid \sG_t], \q\as ~\forall\, 0\le t\le T. $$
\end{proposition}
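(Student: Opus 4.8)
The plan is to reduce the statement to the classical theory of optional projections and to build $\hZ$ explicitly from the canonical decomposition of $Z$. First I would record that $Z$ is of class (D): since $\dbE[\sup_{0\le t\le T}|Z(t)|^2]<\i$ by the standing square-integrability, every $Z(\t)$ with $\t$ a $\{\sF_t\}$-stopping time bounded by $T$ is dominated by the single integrable random variable $\sup_{0\le t\le T}|Z(t)|$, so the family $\{Z(\t)\}$ is uniformly integrable. This is the integrability that makes the conditional expectations $\dbE[Z(t)\nid\sG_t]$ well behaved under $\{\sG_t\}$-stopping times.

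Next I would use the canonical decomposition of the continuous semimartingale, $Z(t)=Z(0)+M(t)+A(t)$, with $M$ a continuous $\{\sF_t\}$-local martingale, $M(0)=0$, and $A$ a continuous $\{\sF_t\}$-adapted process of finite variation, $A(0)=0$, and treat the two parts separately. For the martingale part the key point is that $\sG_s\subseteq\sF_s$, so the tower property gives $\dbE[M(t)\nid\sG_s]=\dbE[\dbE[M(t)\nid\sF_s]\nid\sG_s]=\dbE[M(s)\nid\sG_s]$ for $s\le t$; hence $t\mapsto\dbE[M(t)\nid\sG_t]$ is a $\{\sG_t\}$-martingale (closed by $M(T)$ once $M$ is taken to be a true square-integrable martingale, see below). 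Since $\dbG$ satisfies the usual conditions and $t\mapsto\dbE[\dbE[M(t)\nid\sG_t]]$ is constant, the standard martingale regularization theorem (Doob's upcrossing estimates along the rationals) furnishes an RCLL $\{\sG_t\}$-modification $\hM$ of this martingale.

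For the finite-variation part I would produce a continuous version directly. In the cases of interest $A$ is absolutely continuous, $A(t)=\int_0^t b(s)\,ds$ with $\dbE\int_0^T|b(s)|\,ds<\i$; then $\hA(t)\deq\int_0^t{}^{o}b(s)\,ds$, built from the $\{\sG_t\}$-optional projection ${}^{o}b$ of the integrable process $b$, is absolutely continuous, hence continuous and a fortiori RCLL, and the identity $\hA(t)=\dbE[A(t)\nid\sG_t]$ a.s.\ for each fixed $t$ follows from Fubini's theorem together with the defining property of the optional projection. The general continuous finite-variation case is handled analogously by splitting $A$ into its positive and negative variations and invoking that the optional projection of a continuous process of class (D) admits a continuous version. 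Setting $\hZ(t)=\dbE[Z(0)\nid\sG_t]+\hM(t)+\hA(t)$ gives an RCLL, $\{\sG_t\}$-adapted process with $\hZ(t)=\dbE[Z(t)\nid\sG_t]$ a.s.\ for every $t$; any two RCLL versions agree on the rationals a.s.\ and are therefore indistinguishable.

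The main obstacle is precisely the passage from the pointwise-in-$t$ definition of $\dbE[Z(t)\nid\sG_t]$ to a single process with good sample paths: the conditional expectation at each time is fixed only up to a null set and there are uncountably many times, so regularity cannot be had by freezing $t$. This is overcome for the martingale part by Doob's regularization, which crucially uses that $\dbG$ satisfies the usual conditions, and for the finite-variation part by representing the projection as an absolutely continuous process via Fubini. A secondary technical point to watch is the integrability bookkeeping in the decomposition, namely ensuring that $M$ may be taken a genuine square-integrable martingale and $A$ of integrable total variation after a localization; this is exactly where the class-(D) property and the square-integrability of $Z$ enter.
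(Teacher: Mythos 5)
Your martingale step is correct and coincides with the paper's argument: by the tower property, $t\mapsto\dbE[M(t)\nid\sG_t]$ is a $\{\sG_t\}$-martingale, and Doob's regularization theorem (applicable because $\dbG$ satisfies the usual conditions and the mean function is constant) yields an RCLL, $\{\sG_t\}$-adapted modification. The finite-variation part, however, contains a genuine error. The identity you build it on, $\int_0^t{}^{o}b(s)\,ds=\dbE[A(t)\nid\sG_t]$ a.s., is false in general: conditional Fubini gives $\dbE[A(t)\nid\sG_t]=\int_0^t\dbE[b(s)\nid\sG_t]\,ds$, where the integrand is conditioned on $\sG_t$ (a \emph{smoothed} estimate), not on $\sG_s$ as in ${}^{o}b(s)=\dbE[b(s)\nid\sG_s]$ (a \emph{filtered} estimate). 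The difference of the two processes is a $\{\sG_t\}$-martingale null at $0$ (a one-line tower-property check), and it is typically nonzero. For instance, take $b=W_1$ and let $\sG_t$ be the augmented filtration of $Y(t)=\int_0^t W_1(s)\,ds+W_2(t)$; a Kalman--Bucy computation gives $\dbE\big[\int_0^t W_1(s)\,ds\nid\sG_t\big]-\int_0^t\dbE[W_1(s)\nid\sG_s]\,ds=\int_0^t\big[1-1/\cosh(s)\big]\,dV(s)\neq 0$, where $V$ is the innovation process of $Y$. Your fallback for the general finite-variation case, namely that ``the optional projection of a continuous class (D) process admits a continuous version,'' is also false, besides being circular (it is a stronger form of the very statement under proof): take $\sG_t$ trivial for $t<1$ and $\sG_t=\sF_1$ for $t\ge1$, a filtration satisfying the usual conditions; the projection of a continuous bounded $Z$ equals $\dbE[Z(t)]$ for $t<1$ and $\dbE[Z(t)\nid\sF_1]$ for $t\ge1$, so every version jumps at $t=1$ unless $Z(1)$ is a.s.\ constant.

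The repair is cheap, and it is what the paper does: do not try to \emph{identify} $\dbE[A(t)\nid\sG_t]$, only \emph{regularize} it. Write the finite-variation part as $\th_1-\th_2$ with $\th_i$ continuous, adapted, increasing and integrable. Then $t\mapsto\dbE[\th_i(t)\nid\sG_t]$ is a $\{\sG_t\}$-submartingale (project the inequality $\th_i(t)\ge\th_i(s)$), and its mean function $t\mapsto\dbE[\th_i(t)]$ is continuous by dominated convergence; hence the same regularization theorem you invoked for the martingale part (Karatzas--Shreve, Theorem 1.3.13, which needs only the submartingale property, the usual conditions, and right-continuity of the mean) provides an RCLL, $\{\sG_t\}$-adapted modification. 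Summing the three regularized pieces completes the proof. Identifying what these projections actually \emph{are} is precisely the hard filtering problem treated in \autoref{sec:filter} of the paper, and the martingale your Fubini step silently discards is of the same nature as the innovation integral appearing in \rf{SDE:hX}.
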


\begin{proof}
Since $Z$ is a continuous, square-integrable $\{\sF_t\}$-semimartingale, it admits the decomposition
$$ Z(t) = \th_1(t) - \th_2(t) + \eta(t), $$
where $\th_i=\{\th_i(t);\,0\le t\le T\}$, $i=1,2$, are continuous, $\{\sF_t\}$-adapted increasing processes,
and $\eta=\{\eta(t);\,0\le t\le T\}$ is a continuous $\{\sF_t\}$-martingale. Let
$$ \h\th_i(t) = \dbE[\th_i(t)\nid \sG_t], \q \h\eta(t) = \dbE[\eta(t)\nid \sG_t]. $$
For $0\le s<t\le T$,
\begin{eqnarray*}
& \dbE[\h\th_i(t)\nid \sG_s] =  \dbE[\th_i(t)\nid \sG_s] \ge \dbE[\th_i(s)\nid \sG_s] = \h\th_i(s), \\
& \dbE[\h\eta(t) \nid \sG_s]  =  \dbE[\eta(t)\nid \sG_s] = \dbE\big\{\dbE[\eta(t)\nid\sF_s]\nid \sG_s\big\}
                              =  \dbE[\eta(s)\nid \sG_s] = \h\eta(s).
\end{eqnarray*}
Thus, both $\{\h\th_i(t);\,0\le t\le T\}$ and $\{\h\eta(t);\,0\le t\le T\}$ are $\{\sG_t\}$-submartingales.
Moreover, the functions
$$ t\mapsto \dbE[\h\th_i(t)]= \dbE[\th_i(t)], \q t\mapsto \dbE[\h\eta(t)]= \dbE[\eta(t)]$$
are continuous. Because the filtration $\{\sG_t\}$ satisfies the usual conditions,
the processes $\h\th_i$ ($i=1,2$) and $\h\eta$ have $\{\sG_t\}$-adapted, RCLL modifications
(see Karatzas and Shreve \cite{Karatzas-Shreve1991}, Theorem 1.3.13), and therefore so is $\dbE[Z(t)\nid\sG_t]$.
\end{proof}

\begin{notation}
Hereafter, we shall use the notation $\hZ=\{\hZ(t);\,0\le t\le T\}$ for the optional projection of
a process $Z=\{Z(t);\,0\le t\le T\}$, and use $\tZ=\{\tZ(t);\,0\le t\le T\}$ for the difference
$Z-\hZ$.
\end{notation}

When $Z$ is square-integrable, for each $t\in[0,T]$, $\dbE[Z(t)\nid\sG_t]$ is the orthogonal projection
of $Z(t)$ onto $L^2(\Om,\sG_t)$, the Hilbert space of $\sG_t$-measurable, square-integrable random variables.
In this case, $\tZ(t)=Z(t)-\hZ(t)$ is independent of $\sG_t$, and
$$ \dbE|Z(t)|^2 = \dbE|\hZ(t)|^2 + \dbE|\tZ(t)|^2. $$

\section{Main results}\label{sec:main-results}

In this section, we present the main results of the paper and briefly illustrate the idea.
The rigorous proofs are deferred to the subsequent sections.

\ms

As mentioned previously, the main difficulty of solving Problem (O) is that the filtration generated
by the observation process depends on the control, in which case the traditional variation method fails.
To deal with this difficulty, we shall first derive, for a fixed admissible control $u\in\cU_{ad}$,
the filtering equation for
$$ \hX(t)=\dbE[X(t)\nid\sY_t^u], $$
where $\{\sY_t^u\}$ is the usual augmentation of the filtration generated by the observation process $Y$ (depending on $u$).
The following ordinary differential equation (ODE) will be involved:
\begin{equation}\label{Si}\left\{\begin{aligned}
\dot\Si(t) &= [A(t)-C(t)K(t)^{-1}H(t)]\Si(t) + \Si(t)[A(t)-C(t)K(t)^{-1}H(t)]^\top  \\
&\hp{=\ } -\Si(t)H(t)^\top N(t)^{-1}H(t)\Si(t) + M(t), \q t\in[0,T],\\
\Si(0) &= 0,
\end{aligned}\right.\end{equation}
where $x\in\dbR^n$ is the initial state of the state equation \rf{state}, and
\begin{equation}\label{def:M&N}
  M(t) \deq D(t)D(t)^\top, \q N(t) \deq K(t)K(t)^\top, \q t\in[0,T].
\end{equation}
By reversing time,
$$\t = T-t, \q t\in[0,T], $$
one sees that \rf{Si} can be converted into a Riccati equation of the form \rf{Ric:P} and hence admits
a unique positive semidefinite solution $\Si\in C([0,T];\dbS^n)$ under the assumptions \ref{A1} and \ref{A2}.

\begin{theorem}\label{thm:SDE-hX}
Let \ref{A1}--\ref{A2} hold. For a fixed control $u\in\cU$, let $X$ be the corresponding state process
with initial state $x$ and $Y$ be the observation process. If $u$ is admissible, then $\hX$ evolves according to
the following SDE:
\begin{equation}\label{SDE:hX}\left\{\begin{aligned}
d\hX(t) &= [A(t)\hX(t)+B(t)u(t)+a(t)]dt + [\Si(t)H(t)^\top\! + C(t)K(t)^\top]N(t)^{-1} dV(t), \\
 \hX(0) &= x,
\end{aligned}\right.\end{equation}
where $\Si$ is the solution of \rf{Si}, and $V=\{V(t);\,0\le t\le T\}$ is the {\it innovation process} defined by
\begin{equation}\label{innovation}
 V(t) \deq Y(t) - \int_0^t \big[ H(s)\h X(s)+h(s) \big]ds, \q 0\le t\le T.
\end{equation}
\end{theorem}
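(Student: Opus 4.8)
The plan is to use the innovations method of filtering theory, taking care that the reference filtration $\sY^u$ is generated by the control-dependent observation $Y$ rather than being fixed in advance. The elementary fact underlying everything is that $Y$ is $\{\sF_t\}$-adapted, so $\sY^u_t\subseteq\sF_t$ and the tower property annihilates the conditional expectation of any $\{\sF_t\}$-martingale increment against $\sY^u_s$. Note also that $\hX(0)=x$ and $\tX(0)=0$, since $X(0)=x$ is deterministic.

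First I would establish the two martingale facts on which the argument rests. Using $\dbE[\hX(t)\nid\sY^u_s]=\dbE[X(t)\nid\sY^u_s]$ together with $\dbE[\tX(r)\nid\sY^u_r]=0$, a short computation shows that the innovation $V$ of \rf{innovation} is a continuous, square-integrable $\sY^u$-martingale with $\lan V\ran_t=\int_0^t N(s)\,ds$; by \ref{A2} we have $N>0$, so L\'evy's theorem makes $\int_0^{\cdot} N^{-1/2}\,dV$ an $\sY^u$-Brownian motion. The same tower argument shows that $m(t):=\hX(t)-x-\int_0^t[A\hX+Bu+a]\,ds$ is an $\sY^u$-martingale: the drift projects onto itself because the admissible $u$ is $\sY^u$-adapted and $A,a$ are deterministic, while the remaining term vanishes because $\dbE[\tX(r)\nid\sY^u_s]=0$ for $r\ge s$.

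Next I would represent $m$ and identify the gain. By the Fujisaki--Kallianpur--Kunita martingale representation theorem (which applies here because $N$ is nondegenerate, and which does not require the innovations filtration to coincide with $\sY^u$), every square-integrable $\sY^u$-martingale is an integral against $V$, so $m(t)=\int_0^t\F(s)\,dV(s)$ for some $\sY^u$-predictable $\F$, giving $d\hX=[A\hX+Bu+a]\,dt+\F\,dV$. Subtracting this from \rf{state} yields $d\tX=[A-\F H]\tX\,dt+[C-\F K]\,dW+D\,dW'$ with $\tX(0)=0$; the decisive point is that the control $u$ cancels, so the estimation error is driven only by $(W,W')$. Testing the orthogonality $\dbE[\tX(t)\,\eta(t)]=0$ against $\eta=\int_0^{\cdot}\psi\,dV$ for arbitrary bounded $\sY^u$-predictable $\psi$, and using It\^o's formula with $d\lan\tX,V\ran=[CK^\top-\F N]\,dt$, I obtain $\F N=\Pi H^\top+CK^\top$, where $\Pi(t):=\dbE[\tX(t)\tX(t)^\top\nid\sY^u_t]$ is the conditional error covariance and the term $CK^\top$ is exactly the correlation coming from the shared Brownian motion $W$ in $C\,dW$ and $K\,dW$. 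Because the error equation contains no $u$, the pair (error, innovation) is conditionally Gaussian in the sense of Liptser--Shiryaev, so $\Pi$ is in fact deterministic; applying It\^o to $\tX\tX^\top$, taking expectations and substituting $\F=[\Pi H^\top+CK^\top]N^{-1}$ produces a matrix Riccati equation for $\Pi$ with $\Pi(0)=0$, and since $K$ is square and invertible one has $CC^\top-CK^\top N^{-1}KC^\top=0$ and $CK^\top N^{-1}=CK^{-1}$, collapsing it to precisely \rf{Si} with drift $A-CK^{-1}H$. Uniqueness for \rf{Si} then forces $\Pi=\Si$, whence $\F=[\Si H^\top+CK^\top]N^{-1}$ and \rf{SDE:hX} follows.

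The main obstacle is exactly the control-dependence of $\sY^u$, which blocks a direct appeal to the classical Kalman--Bucy filter: a priori both $\F$ and $\Pi$ could be random and $u$-dependent. The step I would treat most carefully is the decoupling noted above—after subtraction the error $\tX$ obeys an SDE from which $u$ has disappeared—since this is what renders the system conditionally Gaussian and forces the conditional covariance to be the deterministic $\Si$ solving \rf{Si}, uniformly over all admissible controls. I would also check that the representation step uses only that $V$ is the $\sY^u$-innovation, so as not to invoke the (generally false) innovations conjecture that $V$ regenerates $\sY^u$.
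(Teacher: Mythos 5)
Your skeleton coincides with the paper's (\autoref{sec:filter}): show the innovation $V$ is a continuous, square-integrable $\sY^u$-martingale with deterministic bracket $\int_0^\cdot N(s)\,ds$ (the paper's \autoref{lmm:hatV-BM}, which normalizes by $K^{-1}$ rather than $N^{-1/2}$ --- immaterial); show $\L(t)=\hX(t)-x-\int_0^t[A\hX+Bu+a]\,ds$ is a $\sY^u$-martingale via the tower property (\autoref{lmm:Lambda}); represent $\L$ by Fujisaki--Kallianpur--Kunita; identify the integrand by testing against arbitrary stochastic integrals of $V$ (\autoref{lmm:lambda}); and close with the Riccati computation yielding \rf{Si} --- your simplifications $CK^\top N^{-1}=CK^{-1}$ and $CK^\top N^{-1}KC^\top=CC^\top$ are exactly the ones the paper uses. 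The genuine divergence is at the crux. Your duality test produces the \emph{conditional} error covariance $\dbE[\tX(t)\tX(t)^\top\nid\sY^u_t]$ in the gain, and you then need a separate argument that it is deterministic, for which you invoke Liptser--Shiryaev conditionally Gaussian filtering. The paper never sees the conditional covariance: it uses the assertion (end of \autoref{Sec:Pre}) that $\tX(s)$ is \emph{independent} of $\sY^u_s$, so its computation factors directly through the unconditional, automatically deterministic $\Si(s)=\dbE[\tX(s)\tX(s)^\top]$. These are two dressings of the same underlying fact: independence of the error from $\sY^u_s$ is not a consequence of $L^2$-projection alone, and in this model it is precisely the conditionally Gaussian structure that delivers it. Your route makes that explicit with a citable theorem; the price is that the cited theorem, taken in full strength, already contains the filter equation \rf{SDE:hX} being proved, whereas the paper's argument is self-contained modulo FKK and its Section 2 assertion.

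One step of yours needs repair as written: ``because the error equation contains no $u$, the pair is conditionally Gaussian, so the conditional covariance is deterministic'' is circular. After subtraction, $d\tX=[A-\F H]\tX\,dt+[C-\F K]\,dW+D\,dW'$ indeed has no explicit $u$, but its coefficient $\F$ is at that stage only known to be $\sY^u$-adapted, i.e.\ possibly random and, through the observation path, $u$-dependent; and determinism of $\F$ is exactly equivalent to the determinism of the conditional covariance you are trying to prove. The fix is to apply the conditionally Gaussian theorem not to the error equation but to the original pair \rf{state}, \rf{SDE:observer}: admissibility means $u(t)$ is $\sY^u_t$-measurable, hence a non-anticipative functional of the observation path, so $(X,Y)$ is a conditionally Gaussian system in the Liptser--Shiryaev sense, and their theorem gives that the conditional covariance solves a deterministic Riccati equation not involving the path-dependent drift $Bu+a$ --- which, after your algebraic collapse, is \rf{Si}. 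With that rerouting (and the usual localization to justify integrability of the triple products in your orthogonality test), your argument is complete.
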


\begin{corollary}\label{crllry:SDE-tX}
Let \ref{A1}--\ref{A2} hold. If $u\in\cU_{ad}$, then the difference $\tX \deq X-\hX$ satisfies the following SDE:
\begin{equation}\label{SDE:tX}\left\{\begin{aligned}
 d\tX(t) &= [A-(\Si H^\top\! + CK^\top) N^{-1}H ]\tX dt - \Si (K^{-1}H)^\top dW + DdW', \\
  \tX(0) &= 0.
\end{aligned}\right.\end{equation}
Further, $\Si(t)=\dbE[\tX(t)\tX(t)^\top]$.
\end{corollary}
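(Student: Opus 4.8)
The plan is to read off the dynamics of $\tX=X-\hX$ by subtracting the filtering equation \rf{SDE:hX} from the state equation \rf{state}, and then to identify the error covariance $\dbE[\tX\tX^\top]$ with $\Si$ through a uniqueness argument for the associated matrix ODE. Since $u\in\cU_{ad}$ is admissible, \autoref{thm:SDE-hX} applies and gives us the explicit form of $d\hX$, so the whole computation reduces to honest bookkeeping plus one algebraic identity coming from \ref{A2}.

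First I would subtract \rf{SDE:hX} from \rf{state}. The control terms $Bu+a$ and the initial data cancel, leaving $A\tX\,dt + C\,dW + D\,dW'$ minus the innovation integral $[\Si H^\top+CK^\top]N^{-1}\,dV$. To unfold the latter I would use the definition \rf{innovation} of $V$ together with the observation equation \rf{SDE:observer}: since $dY=(HX+h)\,dt+K\,dW$, one gets $dV = H\tX\,dt + K\,dW$. Substituting this splits the innovation term into a drift $[\Si H^\top+CK^\top]N^{-1}H\tX\,dt$ and a Brownian part $[\Si H^\top+CK^\top]N^{-1}K\,dW$. Collecting drift contributions gives $[A-(\Si H^\top+CK^\top)N^{-1}H]\tX$, as claimed, and $\tX(0)=X(0)-\hX(0)=0$. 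The $D\,dW'$ term is inherited unchanged from \rf{state} since the filtering equation carries no $W'$-noise.

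The one nonroutine point is the $dW$-coefficient, and here I would invoke \ref{A2}: because $K$ is invertible and $N=KK^\top$, we have $K^\top N^{-1}=K^{-1}$ and $N^{-1}K=(K^{-1})^\top$. Hence $CK^\top N^{-1}K=C$ and $\Si H^\top N^{-1}K=\Si H^\top(K^{-1})^\top$, so the $dW$-coefficient becomes $C-[\Si H^\top+CK^\top]N^{-1}K = -\Si H^\top(K^{-1})^\top=-\Si(K^{-1}H)^\top$, which is exactly \rf{SDE:tX}. For the covariance, set $\Pi(t)\deq\dbE[\tX(t)\tX(t)^\top]$ and apply It\^o's formula to $\tX\tX^\top$. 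As the coefficients are bounded and $\tX$ is square-integrable, the stochastic integrals are true martingales, so taking expectations removes them, and independence of $W$ and $W'$ kills the cross quadratic variation. Writing $\ti A\deq A-CK^{-1}H$ and using $(K^{-1})^\top K^{-1}=N^{-1}$, the diffusion terms contribute $\Si H^\top N^{-1}H\Si + DD^\top$, and one arrives at a linear matrix ODE for $\Pi$ in which $\Si$ appears as a known coefficient. A direct substitution shows $\Pi=\Si$ solves it; equivalently, the difference $\Delta\deq\Pi-\Si$ satisfies the homogeneous equation $\dot\Delta=\ti A\Delta+\Delta\ti A^\top-\Si H^\top N^{-1}H\Delta-\Delta H^\top N^{-1}H\Si$ with $\Delta(0)=0$, whence $\Delta\equiv0$ by uniqueness for linear ODEs, giving $\Si(t)=\dbE[\tX(t)\tX(t)^\top]$.

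I expect the main obstacle to be the second part: one must correctly assemble the quadratic-variation contributions under It\^o and then recognize that the resulting Riccati-type ODE for $\Pi$ coincides, after substituting $\Pi$ for $\Si$ in the quadratic term, with the defining equation \rf{Si}. The first part is comparatively mechanical, resting on the single identity $K^\top N^{-1}K=I_d$ supplied by \ref{A2}.
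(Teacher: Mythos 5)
Your proof is correct, and its computations coincide with the paper's: you subtract the filtering equation from the state equation, unfold $dV=H\tX\,dt+K\,dW$, use the identities $K^\top N^{-1}=K^{-1}$ and $N^{-1}K=(K^{-1})^\top$ to obtain the $dW$-coefficient $-\Si(K^{-1}H)^\top$, and then apply It\^o's formula to $\tX\tX^\top$ and take expectations. The one genuine difference is the logical direction of the identification $\Si(t)=\dbE[\tX(t)\tX(t)^\top]$. In the paper, \autoref{thm:SDE-hX} and \autoref{crllry:SDE-tX} are proved simultaneously: there $\Si$ is \emph{defined} as the covariance $\dbE[\tX\tX^\top]$ (this is the filter gain produced by \autoref{lmm:lambda}), and the It\^o computation closes up into the Riccati equation \rf{Si} itself, because the drift coefficient contains that same $\Si$; matching this with the $\Si$ appearing in the statements then rests on uniqueness of solutions to \rf{Si}, which the paper gets from the time-reversal argument of \autoref{sec:main-results}. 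You instead take \autoref{thm:SDE-hX} at face value, so your $\Si$ is the ODE solution from the start; the covariance $\Pi$ then satisfies a \emph{linear} matrix ODE in which $\Si$ enters only as a known coefficient, and you conclude $\Pi=\Si$ by checking that $\Si$ solves this linear ODE (the same algebra as the paper's final display, again via $K^\top N^{-1}=K^{-1}$) and invoking uniqueness for linear ODEs. Both routes are valid: yours is the natural self-contained proof of the corollary \emph{given} the theorem as stated, and it needs only elementary linear-ODE uniqueness; the paper's ordering is forced on it because identifying the filter gain with the error covariance is part of proving the theorem in the first place.
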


We observe from \autoref{crllry:SDE-tX} that the process  $\tX$ is independent of the control $u$ and the initial state $x$.
Moreover, by the discussion at the end of \autoref{Sec:Pre}, $\tX$ is orthogonal to $\hX$, i.e.,
$$ \dbE\lan\tX(t),\hX(t)\ran=0, \q \dbE|X(t)|^2 = \dbE|\hX(t)|^2 + \dbE|\tX(t)|^2, \q t\in[0,T]. $$
This suggests the following decomposition of the cost functional \rf{cost}:
$$ J(x;u) = \hJ(x;u) + \tJ, $$
where $\hJ(x;u)$ and  $\tJ$ are given by
\begin{align}\label{def:hJ}
\hJ(x;u) &= \dbE\Big\{\lan G\hX(T),\hX(T)\ran + 2\lan g,\hX(T)\ran  \nn\\
&\hp{=\ } +\int_0^T\bigg[\Blan\begin{pmatrix*}[l]Q(t) & \!S(t)^\top \\ S(t) & \!R(t)\end{pmatrix*}\!
                              \begin{pmatrix}\hX(t) \\ u(t)\end{pmatrix}\!,
                              \begin{pmatrix}\hX(t) \\ u(t)\end{pmatrix}\Bran  \nn\\
&\hp{=\ } +2\Blan\begin{pmatrix}q(t) \\ r(t)\end{pmatrix}\!,
                 \begin{pmatrix}\hX(t) \\ u(t)\end{pmatrix}\Bran\bigg] dt\bigg\},
\end{align}
and
\begin{align}\label{def:tJ}
\tJ &= \dbE\bigg\{\lan G\tX(T),\tX(T)\ran + 2\lan g,\tX(T)\ran \nn\\
&\hp{=\ } +\int_0^T\[\lan Q(t)\tX(t),\tX(t)\ran +2\lan q(t),\tX(t)\ran\] dt\bigg\},
\end{align}
respectively. Note that $\tJ$ does not depend on the control $u$.

\ms

To deal with $\hJ(x;u)$, we recall from \autoref{lmm:LQ-theory} that under our assumptions \ref{A1}--\ref{A3},
the Riccati equation \rf{Ric:P} admits a unique positive semidefinite solution $P\in C([0,T];\dbS^n)$. Let
\begin{equation}\label{def-Theta}
  \Th(t) \deq -R(t)^{-1}[B(t)^\top P(t) + S(t)], \q t\in[0,T].
\end{equation}
The following ODE obviously has a unique solution $\f$:
\begin{equation}\label{ODE:phi}\left\{\begin{aligned}
&\dot\f(t) + [A(t)+B(t)\Th(t)]^\top\f(t) + \Th(t)^\top r(t) +P(t)a(t)+q(t)=0, \q t\in[0,T],\\
&\f(T)=g.
\end{aligned}\right.\end{equation}
Further, we write
\begin{equation}\label{notation}\left\{\begin{aligned}
 C(t) &= (C_1(t),\cdots,C_d(t)), \\
 D(t) &= (D_1(t),\cdots,D_k(t)), \\
\D(t) &= (\D_1(t),\cdots,\D_d(t)) \deq \Si(t)[K(t)^{-1}H(t)]^\top.
\end{aligned}\right.\end{equation}

\begin{proposition}\label{prop:hJ=}
Let \ref{A1}--\ref{A3} hold. For each $u\in\cU_{ad}$,
\begin{align}\label{hJ=P+phi}
\hJ(x;u)
&= \lan P(0)x,x\ran+2\lan\f(0),x\ran \nn\\
&\hp{=\ } +\dbE\int_0^T\bigg\{\lan R[\Th\hX-R^{-1}(B^\top\f+r)-u],[\Th\hX-R^{-1}(B^\top\f+r)-u]\ran \nn\\
&\hp{=\ } -\lan R^{-1}(B^\top\f+r),B^\top\f+r\ran + 2\lan\f,a\ran + \sum_{i=1}^d\lan P(\D_i+C_i),\D_i+C_i\ran\bigg\}dt.
\end{align}
\end{proposition}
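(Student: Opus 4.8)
The plan is to evaluate $\hJ(x;u)$ by applying It\^o's formula to the process $t\mapsto\lan P(t)\hX(t),\hX(t)\ran+2\lan\f(t),\hX(t)\ran$ along the filtering dynamics \rf{SDE:hX}, and then to reorganize the resulting drift into a perfect square by invoking the Riccati equation \rf{Ric:P} and the linear ODE \rf{ODE:phi}. This is the filtering counterpart of the classical completion-of-squares argument for the full-information LQ problem, the only new feature being an extra term coming from the quadratic variation of the innovation process.

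Before differentiating I would record two structural facts about \rf{SDE:hX}. First, since the innovation process $V$ of \rf{innovation} differs from $Y$ only by an absolutely continuous term, it is a square-integrable $\{\sY_t^u\}$-martingale with $d[V,V](t)=N(t)\,dt$, where $N=KK^\top$ as in \rf{def:M&N}. Second, abbreviating the diffusion coefficient of \rf{SDE:hX} by $\b\deq(\Si H^\top+CK^\top)N^{-1}$ and using $N^{-1}=(K^{-1})^\top K^{-1}$ together with the identity $\Si H^\top=\D K^\top$ read off from \rf{notation}, I would verify that $\b=(\D+C)K^{-1}$, and hence $\b N\b^\top=(\D+C)(\D+C)^\top$. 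Splitting this into the columns labelled in \rf{notation} then gives $\tr[P\b N\b^\top]=\sum_{i=1}^d\lan P(\D_i+C_i),\D_i+C_i\ran$, which is exactly the last term appearing in \rf{hJ=P+phi}.

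Applying It\^o's formula and taking expectations, the $dV$-stochastic integrals drop out: indeed $P$ and $\f$ are bounded (by \autoref{lmm:LQ-theory} and \rf{ODE:phi}), $\b$ is bounded, and $\hX$ is square-integrable because $\hX(t)=\dbE[X(t)\nid\sY_t^u]$ with $X$ satisfying \rf{Bound:XY}, so the integrands lie in $L^2$ and the integrals have zero mean. Inserting the terminal data $P(T)=G$ and $\f(T)=g$ yields a formula for $\dbE[\lan G\hX(T),\hX(T)\ran+2\lan g,\hX(T)\ran]$, and adding the running cost of \rf{def:hJ} gives $\hJ(x;u)=\lan P(0)x,x\ran+2\lan\f(0),x\ran+\dbE\int_0^T\{\cdots\}\,dt$ with an integrand I would group into a piece quadratic in $\hX$, a piece bilinear in $(\hX,u)$, the term $\lan Ru,u\ran$, pieces linear in $\hX$ and in $u$, and the control-independent remainder $2\lan\f,a\ran+\sum_{i=1}^d\lan P(\D_i+C_i),\D_i+C_i\ran$.

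Finally I would collapse these groups. The $\hX$-quadratic block equals $\hX^\top[\dot P+PA+A^\top P+Q]\hX$, which by \rf{Ric:P} is $\hX^\top[PB+S^\top]R^{-1}[B^\top P+S]\hX=\lan R\Th\hX,\Th\hX\ran$ in view of \rf{def-Theta}; the bilinear block is $2\lan\hX,(PB+S^\top)u\ran=-2\lan R\Th\hX,u\ran$; together with $\lan Ru,u\ran$ these assemble into $\lan R(\Th\hX-u),\Th\hX-u\ran$. Completing the square against the control-linear term $2\lan B^\top\f+r,u\ran$ rewrites this as $\lan R[\Th\hX-R^{-1}(B^\top\f+r)-u],\,\Th\hX-R^{-1}(B^\top\f+r)-u\ran-\lan R^{-1}(B^\top\f+r),B^\top\f+r\ran$ plus a leftover cross term $2\lan\Th\hX,B^\top\f+r\ran$. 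Combining this leftover with all genuinely $\hX$-linear terms produces $2\lan\dot\f+(A+B\Th)^\top\f+\Th^\top r+Pa+q,\hX\ran$, which vanishes identically by \rf{ODE:phi}, and what remains is precisely the integrand of \rf{hJ=P+phi}. I expect the main obstacles to be the diffusion-coefficient identity $\b N\b^\top=(\D+C)(\D+C)^\top$, a careful bookkeeping of $K$, $K^\top$ and $N^{-1}$, and the verification that the innovation-driven integrals are genuine martingales; once these are settled the algebraic collapse through the Riccati and $\f$ equations is routine.
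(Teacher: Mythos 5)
Your proposal is correct and follows essentially the same route as the paper: It\^o's formula applied to $\lan P(t)\hX(t),\hX(t)\ran$ and $\lan\f(t),\hX(t)\ran$ along the filtering SDE \rf{SDE:hX}, followed by completion of squares via \rf{Ric:P} and \rf{ODE:phi}. The only differences are cosmetic --- the paper rewrites the diffusion term using the standard Brownian motion $\check V$ of \autoref{lmm:hatV-BM} to read off $\sum_{i=1}^d\lan P(\D_i+C_i),\D_i+C_i\ran$, whereas you obtain the same trace term directly from the quadratic variation $d[V,V]=N\,dt$ of the innovation, and you additionally spell out the integrability argument showing the stochastic integrals have zero mean.
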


To deal with $\tJ$, we let
\begin{equation}\label{cA}
 \cA(t) \deq A(t)-[\Si(t) H(t)^\top\! + C(t)K(t)^\top] N(t)^{-1}H(t), \q t\in[0,T],
\end{equation}
and further introduce the linear ODEs
\begin{equation}\label{Ric:Pi}\left\{\begin{aligned}
& \dot\varPi(t) + \varPi(t)\cA(t)  + \cA(t)^\top\varPi(t) + Q(t) =0, \\
& \varPi(T)=G
\end{aligned}\right.\end{equation}
and
\begin{equation}\label{ODE:pi}\left\{\begin{aligned}
&\dot\pi(t) + \cA(t)^\top\pi(t) + q(t) =0, \\
&\pi(T)=g.
\end{aligned}\right.\end{equation}

\begin{proposition}\label{prop:tJ=}
Let \ref{A1}--\ref{A3} hold. Then
\begin{align}\label{tJ=}
\tJ &= \int_0^T\[\sum_{i=1}^k\lan\varPi(t) D_i(t),D_i(t)\ran +\sum_{i=1}^d\lan\varPi(t)\D_i(t),\D_i(t)\ran\]dt.
\end{align}
\end{proposition}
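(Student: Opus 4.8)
The plan is to apply Itô's formula to the two auxiliary processes $\lan\varPi(t)\tX(t),\tX(t)\ran$ and $\lan\pi(t),\tX(t)\ran$ along the dynamics of $\tX$ furnished by \autoref{crllry:SDE-tX}, and then to exploit the defining ODEs \rf{Ric:Pi} and \rf{ODE:pi} to collapse the drift terms. First I would rewrite the diffusion part of \rf{SDE:tX} columnwise using the notation \rf{notation}, so that
$$ d\tX(t) = \cA(t)\tX(t)\,dt - \sum_{i=1}^d\D_i(t)\,dW_i(t) + \sum_{i=1}^k D_i(t)\,dW_i'(t), $$
with $\cA$ given by \rf{cA}. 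This identifies the drift of $\tX$ with $\cA\tX$, precisely the operator that governs both \rf{Ric:Pi} and \rf{ODE:pi}, and it displays the diffusion columns $\D_i$ and $D_i$ that will appear in the Itô correction.

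For the quadratic term I would expand $d\lan\varPi\tX,\tX\ran$ by Itô's formula. The finite-variation part collects as $\lan(\dot\varPi+\varPi\cA+\cA^\top\varPi)\tX,\tX\ran\,dt$, which by \rf{Ric:Pi} equals $-\lan Q\tX,\tX\ran\,dt$; here I use that $\varPi$ is symmetric, which holds because \rf{Ric:Pi} is a linear Lyapunov-type ODE with symmetric data $Q$ and symmetric terminal value $G$. Since $W$ and $W'$ are independent, the second-order term contributes only the deterministic quadratic-variation quantity $\big[\sum_{i=1}^d\lan\varPi\D_i,\D_i\ran+\sum_{i=1}^k\lan\varPi D_i,D_i\ran\big]\,dt$. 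Integrating over $[0,T]$, using $\tX(0)=0$ and $\varPi(T)=G$, and taking expectations gives
$$ \dbE\lan G\tX(T),\tX(T)\ran+\dbE\int_0^T\lan Q\tX,\tX\ran\,dt=\int_0^T\Big[\sum_{i=1}^k\lan\varPi D_i,D_i\ran+\sum_{i=1}^d\lan\varPi\D_i,\D_i\ran\Big]dt. $$

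For the linear term I would apply Itô's formula to $\lan\pi,\tX\ran$; since $\pi$ enters linearly there is no second-order contribution, and the drift collects as $\lan\dot\pi+\cA^\top\pi,\tX\ran=-\lan q,\tX\ran$ by \rf{ODE:pi}. Integrating, using $\tX(0)=0$ and $\pi(T)=g$, and taking expectations yields $\dbE\lan g,\tX(T)\ran+\dbE\int_0^T\lan q,\tX\ran\,dt=0$, so the entire linear part of $\tJ$ in \rf{def:tJ} vanishes. Substituting the two identities back into \rf{def:tJ} then produces exactly \rf{tJ=}.

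The one point demanding care is the vanishing in expectation of the stochastic-integral parts, and this is where I expect the only genuine (though routine) work to lie. Because $A,C,D,H,K^{-1}$ are bounded under \ref{A1}--\ref{A2} and $\Si$ is continuous on $[0,T]$, the linear SDE \rf{SDE:tX} has a square-integrable solution with $\dbE\int_0^T|\tX(t)|^2dt<\i$; combined with the continuity and hence boundedness of $\varPi$ and $\pi$ and the boundedness of the diffusion coefficients, the integrands $2\varPi\tX$ and $\pi$ against $dW$ and $dW'$ lie in the space of square-integrable integrands, so the corresponding stochastic integrals are true martingales with zero mean, justifying the passage to expectations above.
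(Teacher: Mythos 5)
Your proof is correct and follows essentially the same route as the paper's: rewriting \rf{SDE:tX} columnwise as in \rf{notation}, applying It\^o's formula to $\lan\varPi(t)\tX(t),\tX(t)\ran$ and $\lan\pi(t),\tX(t)\ran$, and collapsing the drifts via \rf{Ric:Pi} and \rf{ODE:pi} before substituting into \rf{def:tJ}. Your added remarks on the symmetry of $\varPi$ and on the square-integrability ensuring the stochastic integrals are true zero-mean martingales are points the paper leaves implicit, and they are both accurate.
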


Finally, combining the previous results, we can obtain the following theorem, which gives the optimal control
and the value function of Problem (O).

\begin{theorem}\label{thm:opt-u}
Let \ref{A1}--\ref{A3} hold. For each initial state $x\in\dbR^n$, Problem (O) admits a unique optimal control,
given by the following observation-feedback form:
\begin{align}\label{opt:formula}
u^*(t) &= \Th(t)\hX(t)-R(t)^{-1}[B(t)^\top\f(t)+r(t)].
\end{align}
The optional projection $\hX$ evolves according to the following SDE:
\begin{equation}\label{optimal:hX}\left\{\begin{aligned}
d\hX(t) &= \big\{[A(t)+B(t)\Th(t)]\hX(t)-B(t)R(t)^{-1}[B(t)^\top\f(t)+r(t)]+a(t)\big\}dt \\
&\hp{=\ } + [\Si(t)H(t)^\top\! + C(t)K(t)^\top] N(t)^{-1} \big\{dY(t)-[H(t)\hX(t)+h(t)]dt\big\}, \\
\hX(0) &= x.
\end{aligned}\right.\end{equation}
Moreover, the optimal value is given by
\begin{align}\label{opt:value}
J(x;u^*)
&= \lan P(0)x,x\ran+2\lan\f(0),x\ran +\int_0^T\bigg\{\sum_{i=1}^k\lan\varPi(t) D_i(t),D_i(t)\ran \nn\\
&\hp{=\ } +\sum_{i=1}^d\lan\varPi(t)\D_i(t),\D_i(t)\ran + \sum_{i=1}^d\lan P(t)[\D_i(t)+C_i(t)],\D_i(t)+C_i(t)\ran\nn\\
&\hp{=\ } -\lan R(t)^{-1}[B(t)^\top\f(t)+r(t)],B(t)^\top\f(t)+r(t)\ran + 2\lan\f(t),a(t)\ran\bigg\} dt.
\end{align}
\end{theorem}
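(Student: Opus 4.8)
The plan is to read \autoref{thm:opt-u} as a verification statement resting on \autoref{prop:hJ=} and \autoref{prop:tJ=}. Adding \rf{hJ=P+phi} to \rf{tJ=} and using the decomposition $J(x;u)=\hJ(x;u)+\tJ$, I would first record that for every $u\in\cU_{ad}$
\begin{align*}
J(x;u)&=V_0+\dbE\int_0^T\big\lan R[\Th\hX-R^{-1}(B^\top\f+r)-u],\,\Th\hX-R^{-1}(B^\top\f+r)-u\big\ran dt,
\end{align*}
where $V_0$ is the right-hand side of \rf{opt:value}; indeed, all terms of \rf{hJ=P+phi} and \rf{tJ=} other than the completed square assemble exactly into $V_0$. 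Since \ref{A3} gives $R(t)\ge\d I_m>0$, the integrand is nonnegative and vanishes (a.e., a.s.) precisely when $u$ obeys the feedback law \rf{opt:formula}. Hence $J(x;u)\ge V_0$ for all admissible $u$, and the theorem reduces to producing an admissible control realizing \rf{opt:formula}; the value \rf{opt:value} and uniqueness then follow at once from the displayed identity and the strict positivity of $R$.

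For the construction I would substitute \rf{opt:formula} into the filtering dynamics \rf{SDE:hX} and couple the result with the state equation \rf{state}, using $dV=dY-[H\hX+h]dt$ together with $dY=[HX+h]dt+KdW$. This yields a closed, linear system of SDEs for the pair $(X,Z)$ driven by $(W,W')$ with bounded deterministic coefficients, where $Z$ denotes the candidate optional projection; such a system admits a unique $\dbF$-adapted strong solution satisfying \rf{Bound:XY}. I then set $u^*\deq\Th Z-R^{-1}(B^\top\f+r)$ and define $Y$ through \rf{SDE:observer}, obtaining a well-defined $u^*\in\cU$ for which $Z$ solves the closed-loop filtering equation, i.e.\ \rf{optimal:hX} with $\hX$ there read as $Z$.

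The decisive step is admissibility together with the identification of $Z$ as the true filter. Written in terms of the observation, the $Z$-equation reads
\begin{align*}
dZ&=\big\{[A+B\Th-(\Si H^\top+CK^\top)N^{-1}H]Z-BR^{-1}(B^\top\f+r)+a-(\Si H^\top+CK^\top)N^{-1}h\big\}dt\\
&\hp{=\ }+(\Si H^\top+CK^\top)N^{-1}dY,
\end{align*}
with deterministic coefficients, so its solution is a non-anticipating functional of the path of $Y$ and hence $\sY^{u^*}$-adapted; consequently $u^*\in\cU_{ad}$. Since $u^*$ is now admissible, \autoref{thm:SDE-hX} applies to the genuine filter $\hX=\dbE[X\nid\sY^{u^*}_t]$, which obeys \rf{SDE:hX} with $u=u^*=\Th Z-R^{-1}(B^\top\f+r)$; writing it in observation-driven form and subtracting that of $Z$ — the $B\Th Z$ terms cancelling — the error $e\deq\hX-Z$ solves $\dot e=\cA e$ with $e(0)=0$, where $\cA$ is given by \rf{cA}. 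Hence $e\equiv0$, so $Z=\hX$, the closed-loop equation \rf{optimal:hX} holds for the true filter, and the completed square vanishes along $u^*$, yielding $J(x;u^*)=V_0$.

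I expect the main obstacle to be exactly this admissibility/consistency step, which is where the control-dependence of the observation filtration $\sY^{u}$ bites: one cannot invoke \autoref{thm:SDE-hX} for the candidate $u^*$ before knowing it is admissible. The resolution is the bootstrap above — solve the closed-loop system on the fixed space $(\Om,\sF,\dbP)$, read off $\sY^{u^*}$-adaptedness from the observation-driven form of the filtering equation, and only then use \autoref{thm:SDE-hX} and the deterministic error equation to identify $Z$ with the conditional mean. Granting \autoref{prop:hJ=} and \autoref{prop:tJ=}, the optimality inequality, the value formula \rf{opt:value}, and uniqueness are then routine consequences of the completed-square identity and $R\ge\d I_m$.
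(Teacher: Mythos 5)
Your proposal is correct, and its core---summing \rf{hJ=P+phi} and \rf{tJ=} to get $J(x;u)=V_0+\dbE\int_0^T\lan R[\Th\hX-R^{-1}(B^\top\f+r)-u],\Th\hX-R^{-1}(B^\top\f+r)-u\ran\,dt$ for every admissible $u$, then reading off the lower bound and the equality case from $R\ge\d I_m$---is exactly the paper's argument. Where you go beyond the paper is the admissibility/consistency step: the paper compresses this into one sentence (``This means that the admissible control given by \rf{opt:formula} is optimal'') and obtains \rf{optimal:hX} by substitution, leaving implicit that the feedback law---which is defined through the filter $\hX$ under $u^*$ itself---actually yields a well-defined admissible control. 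Your bootstrap supplies precisely this missing verification: solve the closed-loop linear pair $(X,Z)$ on the fixed space, set $u^*=\Th Z-R^{-1}(B^\top\f+r)$, read $\sY^{u^*}$-adaptedness of $Z$ off the observation-driven form of its equation, then apply \autoref{thm:SDE-hX} to the genuine filter and kill the error $e=\hX-Z$ through the pathwise linear equation $de=\cA e\,dt$, $e(0)=0$ (the $dY$ and $B\Th Z$ terms indeed cancel in the subtraction). This same coupling also makes the uniqueness assertion rigorous: any optimal admissible $u$ must satisfy its own feedback law, so $(X^u,\hX^u)$ solves the identical closed-loop Lipschitz system, and pathwise uniqueness forces $u=u^*$. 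In short, yours is not a different route but a completed version of the paper's proof---same decomposition, same key propositions, plus the fixed-point argument that the control-dependent observation filtration genuinely requires.
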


\section{The filtering equation}\label{sec:filter}

Suppose that $u\in\cU$ is an admissible control. Let $X$ be the corresponding state process with initial state $x$
and Y the observation process. In this section we establish the filtering equation for
$$ \hX(t)=\dbE[X(t)\nid\sY_t^u], $$
as well as \autoref{crllry:SDE-tX}. For natational simplicity, we let
$$\tX(t) \deq X(t)-\hX(t), \q \Si(t) \deq \dbE[\tX(t)\tX(t)^\top]. $$
It will be shown later that the function $\Si$ defined above is the solution of the ODE \rf{Si}.

\ms

Let $V=\{V(t);\,0\le t\le T\}$ be the innovation process defined by \rf{innovation}:
$$ V(t) \deq Y(t) - \int_0^t \big[ H(s)\h X(s)+h(s) \big]ds. $$
First, we have the following result.

\begin{lemma}\label{lmm:hatV-BM}
Let \ref{A1}--\ref{A2} hold. For a fixed admissible control $u\in\cU_{ad}$,
the process $\check V=\{\check V(t);\,0\le t\le T\}$ defined by
$$ \check V(t) \deq \int_0^t K(s)^{-1}dV(s) $$
is a standard $\{\sY_t^u\}$-Brownian motion in $\dbR^d$.
\end{lemma}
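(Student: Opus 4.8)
The plan is to show that $\check V$ is a continuous $\{\sY_t^u\}$-martingale with the quadratic variation of a standard Brownian motion, and then invoke L\'evy's characterization theorem. The starting point is the observation dynamics \rf{SDE:observer}, which give
\begin{align*}
dV(t) = dY(t) - [H(t)\hX(t)+h(t)]dt = H(t)[X(t)-\hX(t)]dt + K(t)dW(t) = H(t)\tX(t)dt + K(t)dW(t).
\end{align*}
Multiplying by $K(t)^{-1}$ yields $d\check V(t) = K(t)^{-1}H(t)\tX(t)dt + dW(t)$, so $\check V$ is continuous and, since its martingale part is the standard $\{\sF_t\}$-Brownian motion $W$, its $\{\sY_t^u\}$-quadratic variation is $\langle \check V\rangle_t = t\,I_d$. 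Thus the only substantive point is the martingale property of $\check V$ with respect to the smaller filtration $\{\sY_t^u\}$.

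First I would verify that $\check V$ is $\{\sY_t^u\}$-adapted. This is immediate from its definition: $V$ is built from $Y$ and from the optional projection $\hX$ (which is $\{\sY_t^u\}$-adapted by the Proposition), and $K^{-1}$ is deterministic and bounded by \ref{A2}, so the stochastic integral $\int_0^\cd K^{-1}dV$ stays inside $\{\sY_t^u\}$. The key step is then to prove that $\check V$ is an $\{\sY_t^u\}$-martingale, equivalently that $V$ has $\{\sY_t^u\}$-martingale increments. For $0\le s<t\le T$ and any $\sY_s^u$-measurable bounded test variable, the natural computation is
\begin{align*}
\dbE\big[V(t)-V(s)\bid\sY_s^u\big] = \dbE\Big[\int_s^t H(r)\tX(r)dr + \int_s^t K(r)dW(r)\,\Bid\,\sY_s^u\Big].
\end{align*}
The second term vanishes because $W$ is an $\{\sF_t\}$-martingale and $\sY_s^u\subseteq\sF_s$, so conditioning first on $\sF_s$ kills it. For the first term one uses Fubini and the defining property of the optional projection: $\dbE[H(r)\tX(r)\nid\sY_r^u]=H(r)\dbE[X(r)-\hX(r)\nid\sY_r^u]=0$, and then a tower-property argument brings this down to conditioning on $\sY_s^u$.

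The part I expect to be the main obstacle is the interchange of conditional expectation, time integration, and the successive conditionings needed to push $\dbE[H(r)\tX(r)\nid\sY_r^u]=0$ through to $\dbE[\,\cdot\,\nid\sY_s^u]$ for $r\in[s,t]$ — in other words, justifying that $\int_s^t H(r)\tX(r)dr$ has zero $\sY_s^u$-conditional mean. A clean way to handle this is to establish that $\{V(t)\}$ is itself an $\{\sY_t^u\}$-martingale by testing against indicators of sets in $\sY_s^u$ and applying the Fubini theorem together with the tower property $\sY_s^u\subseteq\sY_r^u$, using square-integrability (guaranteed by \rf{Bound:XY} and the boundedness of the coefficients) to justify all the interchanges. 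Once the $\{\sY_t^u\}$-martingale property of $\check V$ is in hand, continuity is clear and the bracket computation above gives $\langle\check V^i,\check V^j\rangle_t=\delta_{ij}t$, so L\'evy's theorem (as in Karatzas and Shreve) finishes the proof that $\check V$ is a standard $d$-dimensional $\{\sY_t^u\}$-Brownian motion.
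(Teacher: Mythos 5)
Your proof is correct, and while it shares the paper's overall skeleton (continuity, adaptedness, the $\{\sY_t^u\}$-martingale property, the bracket $\delta_{ij}t$, then L\'evy's characterization), the two decisive steps are justified by genuinely different means. For the martingale property, the paper argues that $\tX(\tau)$ is \emph{independent} of $\sY_s^u$ for $\tau\ge s$, so that $\dbE\big[\int_s^t K^{-1}H\tX\,d\tau \nid \sY_s^u\big]=\int_s^t K^{-1}H\,\dbE[\tX(\tau)]\,d\tau=0$; you instead use only the definitional identity $\dbE[\tX(r)\nid\sY_r^u]=\dbE[X(r)\nid\sY_r^u]-\hX(r)=0$ together with the tower property $\sY_s^u\subseteq\sY_r^u$ and conditional Fubini. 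Your route is more elementary and avoids invoking independence of the estimation error from the observation $\sigma$-field --- a strictly stronger assertion than the $L^2$-orthogonality discussed at the end of Section 2, and one that really rests on the conditionally Gaussian structure of the linear system. For the bracket, the paper performs an explicit It\^o computation of $\check V\check V^\top$ and verifies $\dbE\big[\check V(t)\check V(t)^\top-\check V(s)\check V(s)^\top \nid \sY_s^u\big]=(t-s)I_d$, again via independence; you shortcut this by noting that $\check V-W=\int_0^\cdot K^{-1}H\tX\,dr$ is continuous and of finite variation, so the quadratic covariation --- a pathwise, filtration-free quantity --- satisfies $[\check V_i,\check V_j]_t=[W_i,W_j]_t=\delta_{ij}t$, which coincides with the $\{\sY_t^u\}$-predictable bracket once the martingale property is established. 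This is a genuine simplification of the paper's longest computation; what the paper's heavier argument buys is essentially a rehearsal of the conditional-expectation manipulations that reappear in the proof of \autoref{lmm:lambda}, whereas your argument has the advantage of remaining valid verbatim even if one only knows orthogonality, not independence, of $\tX(t)$ and $\sY_t^u$.
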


\begin{proof}
Clearly, $\check V$ is a continuous, $\{\sY_t^u\}$-adapted, integrable process, and
\begin{align*}
d\check V(t)
&= K(t)^{-1}dV(t) = K(t)^{-1}H(t)[X(t)-\h X(t)]dt + dW(t) \\
&= K(t)^{-1}H(t)\tX(t)dt + dW(t).
\end{align*}
Thus, for $0\le s<t\le T$,
\begin{align*}
\check V(t)-\check V(s) &= W(t)-W(s) + \int_s^t K(\t)^{-1}H(\t)\tX(\t)d\t.
\end{align*}
Since for $\t\ge s$, $\tX(\t)$ is independent of $\sY_s^u$,
\begin{align*}
\dbE[\check V(t)-\check V(s)\nid \sY_s^u]
&= \dbE\[\dbE[W(t)-W(s)\nid\sF_s]\nid\sY_s^u\] + \dbE\lt[\int_s^t K(\t)^{-1}H(\t)\tX(\t)d\t \Bid\sY_s^u\rt] \\
&=\int_s^t K(\t)^{-1}H(\t)\dbE[\tX(\t)]d\t =0.
\end{align*}
This shows that $\check V$ is a $\{\sY_t^u\}$-martingale. Further,
\begin{align*}
d[\check V(t)\check V(t)^\top]
&=  K(t)^{-1}H(t)\tX(t)\check V(t)^\top dt + [dW(t)]\check V(t)^\top \\
&\hp{=\ } + \check V(t)[K(t)^{-1}H(t)\tX(t)]^\top dt + \check V(t)dW(t)^\top + I_d\,dt,
\end{align*}
and hence
\begin{align}\label{checkV}
&\check V(t)\check V(t)^\top-\check V(s)\check V(s)^\top
=  \int_s^t K(\t)^{-1}H(\t)\tX(\t)\check V(\t)^\top d\t + \lt[\int_s^t\check V(\t)dW(\t)^\top\rt]^\top \nn\\
&\qq\q + \lt[\int_s^t K(\t)^{-1}H(\t)\tX(\t)\check V(\t)^\top d\t\rt]^\top + \int_s^t \check V(\t)dW(\t)^\top + (t-s)I_d.
\end{align}
Again, since $\tX(\t)$ is independent of $\sY_s^u$ for $\t\ge s$, we have
\begin{align*}
& \dbE\lt[\int_s^t K(\t)^{-1}H(\t)\tX(\t)\check V(\t)^\top d\t\Bid \sY_s^u\rt] \\
&\q=\int_s^t K(\t)^{-1}H(\t)\dbE\[\tX(\t)\check V(\t)^\top\bid \sY_s^u\] d\t \\
&\q=\int_s^t K(\t)^{-1}H(\t)\dbE\Big\{\dbE\big[\tX(\t)\check V(\t)^\top\bid \sY_\t^u\big]\bid \sY_s^u\Big\} d\t \\
&\q=\int_s^t K(\t)^{-1}H(\t)\dbE\Big\{\dbE[\tX(\t)]\check V(\t)^\top\bid \sY_s^u\Big\} d\t \\
&\q=0.
\end{align*}
For the stochastic integral in \rf{checkV}, we have
$$\dbE\lt[\int_s^t \check V(\t)dW(\t)^\top\Bid \sY_s^u\rt] = \dbE\lt\{\dbE\lt[\int_s^t \check V(\t)dW(\t)^\top\Bid \sF_s \rt]\bigg| \sY_s^u\rt\} = 0. $$
So taking conditional expectations with respect to $\sY_s^u$ on both sides of \rf{checkV} yields
$$ \dbE\[\check V(t)\check V(t)^\top-\check V(s)\check V(s)^\top \bid \sY_s^u\] = (t-s)I_d, $$
which implies that the cross-variations are given by
$$ \lan\check V_i,\check V_j\ran_t = \d_{ij}t; \q 1\le i, j\le d. $$
It then follows from
the martingale characterization of Brownian motion that $\check V$ is a standard $d$-dimensional Brownian motion.
\end{proof}

Now we consider the process $\L=\{\L(t);\,0\le t\le T\}$ defined by
\begin{equation}\label{def:Lambda}
\L(t) \deq \hX(t)-x-\int_0^t \big[A(s)\hX(s)+B(s)u(s)+a(s)\big] ds.
\end{equation}
Clearly, it is $\{\sY_t^u\}$-adapted and RCLL, with $\L(0)=0$ almost surely.
Further, by \rf{Bound:XY} it is also square-integrable.
The following result shows that $\L$ is actually a $\{\sY_t^u\}$-martingale.

\begin{lemma}\label{lmm:Lambda}
The process $\L=\{\L(t);\,0\le t\le T\}$ defined by \rf{def:Lambda} is an RCLL, square-integrable
$\{\sY_t^u\}$-martingale with $\L(0)=0$ a.s..
\end{lemma}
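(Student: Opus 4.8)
The plan is to verify directly the martingale identity $\dbE[\L(t)-\L(s)\nid\sY_s^u]=0$ for $0\le s<t\le T$; the $\{\sY_t^u\}$-adaptedness, the RCLL paths, the square-integrability, and $\L(0)=0$ have already been recorded, so this identity is all that is left. From \rf{def:Lambda},
$$\L(t)-\L(s) = [\hX(t)-\hX(s)] - \int_s^t[A(r)\hX(r)+B(r)u(r)+a(r)]\,dr,$$
and I would handle the difference $\hX(t)-\hX(s)$ and the drift integral separately.

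For $\hX(t)-\hX(s)$, I would invoke the tower property through $\sY_s^u\subseteq\sY_t^u$: since $\hX(t)=\dbE[X(t)\nid\sY_t^u]$ and $\hX(s)=\dbE[X(s)\nid\sY_s^u]$, one gets $\dbE[\hX(t)-\hX(s)\nid\sY_s^u]=\dbE[X(t)-X(s)\nid\sY_s^u]$. Substituting the state dynamics \rf{state} on $[s,t]$ splits this into a drift part and the two stochastic integrals $\int_s^t C\,dW$ and $\int_s^t D\,dW'$. These integrals are increments of genuine $\{\sF_t\}$-martingales (their integrands being bounded and deterministic), so conditioning first on $\sF_s$ annihilates them; because $\sY_s^u\subseteq\sF_s$, a further application of the tower property shows that their $\sY_s^u$-conditional means also vanish. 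By the conditional Fubini theorem, justified via the bound \rf{Bound:XY}, the surviving drift part equals $\int_s^t\dbE[A(r)X(r)+B(r)u(r)+a(r)\nid\sY_s^u]\,dr$.

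Subtracting the drift integral appearing in the definition of $\L$ and interchanging $\dbE[\cd\nid\sY_s^u]$ with the time integral once more, the $Bu$ and $a$ contributions cancel exactly, leaving
$$\dbE[\L(t)-\L(s)\nid\sY_s^u]=\int_s^t A(r)\,\dbE[\tX(r)\nid\sY_s^u]\,dr.$$
The proof then hinges on the single identity $\dbE[\tX(r)\nid\sY_s^u]=0$ for all $r\ge s$. This is immediate from the definition $\tX(r)=X(r)-\dbE[X(r)\nid\sY_r^u]$, which yields $\dbE[\tX(r)\nid\sY_r^u]=0$, combined with the tower property through $\sY_s^u\subseteq\sY_r^u$. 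Hence the integrand is identically zero and the martingale property follows.

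The only genuinely delicate points are the two interchanges of conditional expectation with the Lebesgue integral and, above all, the vanishing of the $\sY_s^u$-conditional means of the stochastic integrals with respect to the observation filtration $\{\sY_t^u\}$ rather than $\{\sF_t\}$; both are resolved by the inclusion $\sY_s^u\subseteq\sF_s$ and the tower property, exactly as in the proof of \autoref{lmm:hatV-BM}. Everything else is routine bookkeeping, so I do not anticipate a substantive obstacle beyond this conditioning step.
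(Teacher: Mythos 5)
Your proof is correct and takes essentially the same route as the paper's: both verify the martingale identity $\dbE[\L(t)-\L(s)\nid\sY_s^u]=0$ directly, using the tower property to pass from $\hX(t)-\hX(s)$ to $X(t)-X(s)$, conditional Fubini for the drift integrals, annihilation of the stochastic integrals by conditioning first on $\sF_s$ (via $\sY_s^u\subseteq\sF_s$), and the identity $\dbE[\tX(\t)\nid\sY_s^u]=0$ for $\t\ge s$, which the paper invokes in the equivalent form $\dbE[A(\t)\hX(\t)\nid\sY_s^u]=\dbE[A(\t)X(\t)\nid\sY_s^u]$. The only difference is bookkeeping order: you expand the state dynamics first and cancel the drifts at the end, whereas the paper replaces $\hX$ by $X$ in the drift first and then isolates the stochastic integrals.
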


\begin{proof}
For fixed $0\le s<t\le T$, we have
\begin{align*}
&\dbE\[\L(t)-\L(s) \nid \sY_s^u\] \\
&\q= \dbE\[\hX(t)-\hX(s) \nid \sY_s^u\] - \dbE\lt\{\int_s^t \big[A(\t)\hX(\t)+B(\t)u(\t)+a(\t)\big] d\t \Bid \sY_s^u\rt\} \\
&\q= \dbE\[X(t)-X(s) \nid \sY_s^u\] - \int_s^t \dbE\[A(\t)\hX(\t)+B(\t)u(\t)+a(\t)\bid \sY_s^u\] d\t  \\
&\q= \dbE\[X(t)-X(s) \nid \sY_s^u\] - \int_s^t \dbE\[A(\t)X(\t)+B(\t)u(\t)+a(\t)\bid \sY_s^u\] d\t  \\
&\q= \dbE\[X(t)-X(s) \nid \sY_s^u\] - \dbE\lt\{\int_s^t \big[A(\t)X(\t)+B(\t)u(\t)+a(\t)\big]d\t \Bid \sY_s^u\rt\} \\
&\q= \dbE\lt[\int_s^t C(\t)dW(\t) + \int_s^t D(\t)dW'(\t) \Bid \sY_s^u\rt]\\
&\q= \dbE\lt\{\dbE\lt[\int_s^t C(\t)dW(\t) + \int_s^t D(\t)dW'(\t) \Bid \sF_s\rt] \bigg| \sY_s^u\rt\} \\
&\q= 0.
\end{align*}
This shows that $\L$ is a $\{\sY_t^u\}$-martingale.
\end{proof}

It is not clear whether the process $\L$ is adapted to the smaller filtration generated by the Brownian motion $\check{V}$.
So we cannot conclude directly by the martingale representation theorem that $L$ can be expressed as a stochastic
integral with respect to $\check{V}$.
Fortunately, thanks to the theorem of Fujisaki, Kallianpur and Kunita (see Rogers and Williams \cite{Rogers-Williams2000}, VI.8),
there exists an $\dbR^{n\times d}$-valued, square-integrable, $\{\sY_t^u\}$-progressively measurable process
$\l=\{\l(t);\,0\le t\le T\}$ such that
\begin{equation}\label{M-lambda}
   \L(t) = \int_0^t \l(s)N(s)^{-1} dV(s), \q 0\le t\le T.
\end{equation}
Next we determine the process $\l$.

\begin{proposition}\label{lmm:lambda}
The process $\l$ in \rf{M-lambda} is given by
\begin{equation}\label{K-filter:K}
   \l(t) = \Si(t)H(t)^\top\! + C(t)K(t)^\top, \q 0\le t\le T,
\end{equation}
where $\Si(t)\deq\dbE[\tX(t)\tX(t)^\top]$.
\end{proposition}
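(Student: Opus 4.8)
The plan is to identify $\l$ as the density of the predictable quadratic covariation of the two $\{\sY_t^u\}$-martingales $\L$ and $V$. Since $V(t)=\int_0^t K(s)\,d\check V(s)$ with $\check V$ a standard $\{\sY_t^u\}$-Brownian motion by \autoref{lmm:hatV-BM}, one has $\langle V,V^\top\rangle_t=\int_0^t N(s)\,ds$; feeding the representation \rf{M-lambda} into this gives $\langle\L,V^\top\rangle_t=\int_0^t\l(s)\,ds$, so it is enough to compute $\langle\L,V^\top\rangle$ and read off the integrand. I would not try to evaluate this bracket pathwise: since $\L-\hX$ has bounded variation, $\langle\L,V^\top\rangle=\langle\hX,V^\top\rangle$, but the martingale part of $\hX$ is precisely the unknown $\int\l N^{-1}\,dV$, so computing the bracket from an Itô decomposition of $\hX$ would be circular. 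The essential non-circular input is the filtering identity $\hX(t)=\dbE[X(t)\nid\sY_t^u]$ itself.

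Accordingly, I would work through the martingale characterization of the covariation: for $0\le s<t\le T$, $\dbE[(\L(t)-\L(s))(V(t)-V(s))^\top\nid\sY_s^u]=\dbE[\int_s^t\l(\tau)\,d\tau\nid\sY_s^u]$. To evaluate the left-hand side I would (i) use $\hX(t)=\dbE[X(t)\nid\sY_t^u]$ together with the $\sY_t^u$-measurability of $V(t)-V(s)$ to replace $\hX(t)$ by $X(t)$ under the conditional expectation; (ii) substitute the innovation dynamics $dV=H\tX\,dt+K\,dW$ and the state dynamics, using $\dbE[\tX(\tau)\nid\sY_\tau^u]=0$; and (iii) apply the Itô isometry to the correlated pair $C\,dW$ (in $X$) and $K\,dW$ (in $V$), which is the source of the $CK^\top$ term. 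After dividing by $t-s$ and letting $t\downarrow s$, the remaining contributions are products of increments of total order higher than $t-s$ and vanish, leaving
\[
\l(t)=\dbE[\tX(t)\tX(t)^\top\nid\sY_t^u]\,H(t)^\top+C(t)K(t)^\top .
\]

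The main difficulty is the final identification. The computation above delivers the \emph{conditional} error covariance $\dbE[\tX(t)\tX(t)^\top\nid\sY_t^u]$, whereas the statement asserts the \emph{deterministic} matrix $\Si(t)H(t)^\top+C(t)K(t)^\top$ with $\Si(t)=\dbE[\tX(t)\tX(t)^\top]$. What must still be shown is that the conditional error covariance is non-random and equals $\Si(t)$; I expect this to be the crux. Conceptually it holds because the system is conditionally Gaussian: the control $u$ is $\sY^u$-measurable and all structural coefficients are deterministic, so given $\sY_t^u$ the pair $(X,Y)$ is Gaussian and $\tX(t)$ is conditionally Gaussian with a covariance that does not depend on the observation path. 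To make this concrete I would insert the gain just obtained into the equation for $\hX$, apply Itô to $\tX\tX^\top$, and take the conditional expectation to derive an equation for $P(t)\deq\dbE[\tX(t)\tX(t)^\top\nid\sY_t^u]$; the delicate point is that differentiating a conditional expectation along the evolving filtration $\{\sY_t^u\}$ produces extra correction terms, and one must verify that these cancel, so that $P$ solves the deterministic Riccati equation \rf{Si}. Since \rf{Si} has a unique (deterministic) solution, this forces $P(t)=\Si(t)$ almost surely, yielding $\l=\Si H^\top+CK^\top$.
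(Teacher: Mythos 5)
Your route---reading $\l$ off from the bracket $\lan\L,V^\top\ran_t=\int_0^t\l(s)\,ds$ and evaluating it through an innovations computation---is a genuinely different organization from the paper's, and your intermediate identity $\l(t)=\dbE[\tX(t)\tX(t)^\top\nid\sY_t^u]\,H(t)^\top+C(t)K(t)^\top$ is the correct general filtering gain. The problem is the endgame, exactly where you say you expect the crux to be: the step showing that the conditional error covariance $\dbE[\tX(t)\tX(t)^\top\nid\sY_t^u]$ is non-random and equal to $\Si(t)$ is never actually proved. The resolution you sketch---apply It\^o's formula to $\tX\tX^\top$, take conditional expectations along the moving filtration $\{\sY_t^u\}$, verify that the correction terms cancel, then invoke uniqueness for \rf{Si}---is not a routine verification. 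At that stage the dynamics of $\tX$ contain the gain, which you have only expressed through the very conditional covariance you are trying to identify, and proving the asserted cancellation is essentially equivalent to establishing the conditionally Gaussian property of the filter (the content of Liptser--Shiryaev's theory of conditionally Gaussian processes). That is a substantial theorem, not a finishing step, so as submitted the argument does not close.

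The paper closes this hole with a much lighter input, one that was available to you: the property recorded at the end of \autoref{Sec:Pre}, that the estimation error $\tX(t)=X(t)-\hX(t)$ is independent of $\sY_t^u$ and has mean zero. Granting that, your crux evaporates in one line, since $\dbE[\tX(t)\tX(t)^\top\nid\sY_t^u]=\dbE[\tX(t)\tX(t)^\top]=\Si(t)$. Structurally, the paper also stays unconditional rather than conditional: it tests $\L$ against an arbitrary $\{\sY_t^u\}$-martingale $\eta(t)=\int_0^t\z(s)N(s)^{-1}dV(s)$, computes $\dbE[\L(t)\eta(t)^\top]$ twice---once from the representation \rf{M-lambda}, once from the definition \rf{def:Lambda} by integration by parts against the state and innovation dynamics (which is where the $CK^\top$ cross-variation term arises, just as in your step (iii))---and uses independence together with $\dbE[\tX(s)]=0$ to replace $\tX(s)\tX(s)^\top$ by the deterministic matrix $\Si(s)$ in front of every $\sY_s^u$-measurable factor; arbitrariness of $\z$ then yields \rf{K-filter:K}. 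Your instinct that a deterministic conditional covariance is ultimately a Gaussian-type fact is sound---the independence assertion of \autoref{Sec:Pre} is precisely where that content is lodged in this paper---but within the paper's framework it is a stated property of the orthogonal decomposition, and some such input is indispensable: your proposal, as written, substitutes for it a program that is not carried out.
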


\begin{proof}
Let $\z=\{\z(t);\,0\le t\le T\}$  be a fixed but arbitrary $\dbR^{n\times d}$-valued, square-integrable,
$\{\sY_t^u\}$-progressively measurable processes.
Consider the $\{\sY_t^u\}$-martingale
$$\eta(t) = \int_0^t \z(s)N(s)^{-1}dV(s), \q 0\le t\le T.$$
Using \rf{M-lambda} we have
\begin{align}\label{E(phieta)-1}
\dbE\[\L(t)\eta(t)^\top\] &= \dbE\int_0^t \l(s)N(s)^{-1}\z(s)^\top ds, \q\forall 0\le t\le T.
\end{align}
On the other hand, using \rf{def:Lambda} we have
\begin{align*}
\dbE\[\L(t)\eta(t)^\top\]
&= \dbE\[\hX(t)\eta(t)^\top\] -\int_0^t \dbE\Big\{\big[A(s)\hX(s)+B(s)u(s)+a(s)\big]\eta(t)^\top\Big\} ds.
\end{align*}
Since $\{\eta(t);\,0\le t\le T\}$ is a $\{\sY_t^u\}$-martingale, we have for $0\le s\le t\le T$,
\begin{align*}
\dbE\[\hX(s)\eta(t)^\top\]
&= \dbE\[\dbE\big[\hX(s)\eta(t)^\top\bid \sY_s^u\big]\] = \dbE\[\hX(s)\dbE\big[\eta(t)^\top\nid \sY_s^u\big]\] \\
&= \dbE\[\hX(s)\eta(s)^\top\] = \dbE\[\dbE[X(s)\nid \sY_s^u]\eta(s)^\top\]  \\
&= \dbE\[X(s)\eta(s)^\top\].
\end{align*}
Similarly, for $0\le s\le t\le T$,
\begin{align*}
\dbE\Big\{[B(s)u(s)+a(s)]\eta(t)^\top\Big\} = \dbE\Big\{[B(s)u(s)+a(s)]\eta(s)^\top\Big\}.
\end{align*}
It follows that
\begin{align}\label{E(phieta)-2}
\dbE\[\L(t)\eta(t)^\top\]
&= \dbE\[X(t)\eta(t)^\top\] -\dbE\int_0^t \big[A(s)X(s)+B(s)u(s)+a(s)\big]\eta(s)^\top ds.
\end{align}
We observe that
\begin{align*}
d\eta(t)
&= \z(t)N(t)^{-1}dV(t) = \z(t)N(t)^{-1}\Big\{ dY(t) -[ H(t)\h X(t)+h(t)]dt \Big\} \\
&= \z(t)N(t)^{-1}\[K(t)dW(t)+H(t)\tX(t)dt\] \\
&= \z(t)N(t)^{-1}H(t)\tX(t)dt + \z(t)N(t)^{-1}K(t)dW(t).
\end{align*}
Thus, integration by parts yields
\begin{align}\label{EXeta}
\dbE[X(t)\eta(t)^\top]
&=\dbE\int_0^t [A(s)X(s)+B(s)u(s)+a(s)]\eta(s)^\top ds \nn\\
&\hp{=\ } +\dbE\int_0^t X(s)\tX(s)^\top H(s)^\top N(s)^{-1}\z(s)^\top ds \nn\\
&\hp{=\ } + \dbE\int_0^t C(s)K(s)^\top N(s)^{-1}\z(s)^\top ds.
\end{align}
Using the facts
$$ \z(s)\in \sY_s^u; \q \tX(s)~\text{is independent of}~\sY_s^u; \q \dbE[\tX(s)]=0, $$
we see that
\begin{align*}
& \dbE\int_0^t X(s)\tX(s)^\top H(s)^\top N(s)^{-1}\z(s)^\top ds \\
&~= \dbE\!\int_0^t \tX(s)\tX(s)^\top H(s)^\top N(s)^{-1}\z(s)^\top ds
    +\dbE\!\int_0^t \hX(s)\tX(s)^\top H(s)^\top N(s)^{-1}\z(s)^\top ds  \\
&~= \dbE\!\int_0^t \dbE[\tX(s)\tX(s)^\top] H(s)^\top N(s)^{-1}\z(s)^\top ds
    +\dbE\!\int_0^t \hX(s)\dbE[\tX(s)^\top] H(s)^\top N(s)^{-1}\z(s)^\top ds \\
&~= \dbE\!\int_0^t \Si(s)H(s)^\top N(s)^{-1}\z(s)^\top ds.
\end{align*}
Now we can obtain from \rf{EXeta} that
\begin{align*}
\dbE[X(t)\eta(t)^\top]
&=\dbE\int_0^t \big[A(s)X(s)+B(s)u(s)+a(s)\big]\eta(s)^\top ds \\
&\hp{=\ } +\dbE\int_0^t \[\Si(s)H(s)^\top+ C(s)K(s)^\top\] N(s)^{-1}\z(s)^\top ds.
\end{align*}
The above, together with \rf{E(phieta)-2}, gives
\begin{align}\label{E(phieta)-3}
\dbE\[\L(t)\eta(t)^\top\]
&= \dbE\int_0^t \[\Si(s)H(s)^\top+C(s)K(s)^\top\] N(s)^{-1}\z(s)^\top  ds.
\end{align}
Comparing \rf{E(phieta)-1} and \rf{E(phieta)-3} and noting that $\z$ is arbitrary, we get \rf{K-filter:K}.
\end{proof}

We are ready now for the proofs of \autoref{thm:SDE-hX} and \autoref{crllry:SDE-tX}.

\begin{proof}[\textbf{Proofs of \autoref{thm:SDE-hX} and \autoref{crllry:SDE-tX}}]
From the definition \rf{def:Lambda} of $\L$, the representation \rf{M-lambda} of $\L$, and \autoref{lmm:lambda},
it follows that
\begin{align}\label{hX:expanded-1}
d\hX(t) &= [A(t)\hX(t)+B(t)u(t)+a(t)]dt +  [\Si(t) H(t)^\top\! + C(t)K(t)^\top] N(t)^{-1}dV(t).
\end{align}
By the definition of $V$ and $Y$,
$$ dV(t) = dY(t) - [H(t)\hX(t) + h(t)]dt = H(t)\tX(t)dt + K(t)dW(t), $$
which, substituted in \rf{hX:expanded-1}, yields
\begin{align}\label{hX:expanded-2}
d\hX(t) &= (A\hX+Bu+a)dt + (\Si H^\top\!+CK^\top) N^{-1}H\tX dt  \nn\\
&\hp{=\ } + (\Si H^\top\!+CK^\top) N^{-1}K dW(t).
\end{align}
Noting that $N(t)=K(t)K(t)^\top$, we obtain by subtracting \rf{hX:expanded-2} from the SDE of $X$ that
\begin{align*}
 d\tX(t) &= [A - (\Si H^\top\!+CK^\top) N^{-1}H ]\tX dt - \Si(K^{-1}H)^\top dW + DdW'.
\end{align*}
Integration by parts then gives
\begin{align*}
\Si(t)- \Si(0)
&= \dbE[\tX(t)\tX(t)^\top] - \dbE[\tX(0)\tX(0)^\top] \\
&= \dbE\int_0^t [A(s) - (\Si(s)H(s)^\top\!+C(s)K(s)^\top) N(s)^{-1}H(s)]\tX(s)\tX(s)^\top ds \\
&\hp{=\ } + \dbE\int_0^t \tX(s)\tX(s)^\top[A(s) - (\Si(s)H(s)^\top\!+C(s)K(s)^\top) N(s)^{-1}H(s)]^\top ds \\
&\hp{=\ } + \dbE\int_0^t \Si(s)[K(s)^{-1}H(s)]^\top K(s)^{-1}H(s)\Si(s)ds + \dbE\int_0^t D(s)D(s)^\top ds \\
&= \int_0^t [A(s) - (\Si(s)H(s)^\top\!+C(s)K(s)^\top) N(s)^{-1}H(s)]\Si(s) ds \\
&\hp{=\ } + \int_0^t \Si(s)[A(s) - (\Si(s)H(s)^\top\!+C(s)K(s)^\top) N(s)^{-1}H(s)]^\top ds \\
&\hp{=\ } + \int_0^t [\Si(s)H(s)^\top N(s)^{-1}H(s)\Si(s) + M(s)] ds  \\
&= \int_0^t \Big\{[A(s)-C(s)K(s)^{-1}H(s)]\Si(s) + \Si(s)[A(s)-C(s)K(s)^{-1}H(s)]^\top  \\
&\hp{=\ } -\Si(s) H(s)^\top N(s)^{-1}H(s)\Si(s) + M(s)\Big\} ds,
\end{align*}
which is exactly the integral version of \rf{Si}.
\end{proof}

\section{The optimal control}\label{sec:optimal-u}

This section is devoted to the proofs of \autoref{prop:hJ=}, \autoref{prop:tJ=} and \autoref{thm:opt-u}.
Recall that if the control $u\in\cU$ is admissible, then $\tX$ is orthogonal to $\hX$ and thereby
we can write the cost functional as
$$ J(x;u) = \hJ(x;u) + \tJ, $$
where $\hJ(x;u)$ and $\tJ$ are given by \rf{def:hJ} and \rf{def:tJ}, respectively.
With the notation \rf{notation}, we see that
\begin{equation}
[\Si(t) H(t)^\top\!+C(t)K(t)^\top][K(t)^\top]^{-1} = (\D_1(t)+C_1(t),\cdots,\D_d(t)+C_d(t)).
\end{equation}
We prove \autoref{prop:hJ=} first.

\begin{proof}[\textbf{Proof of \autoref{prop:hJ=}.}]
For an admissible control $u\in\cU_{ad}$, we know from \autoref{thm:SDE-hX} that $\hX$,
the $\{\sY_t^u\}$-optional projection of the state process $X$, evolves according to the SDE \rf{SDE:hX}.
Let $\check{V}_i(t)$ be the $i$th component of $\check{V}(t)$ so that
\begin{equation*}
d\hX(t) = [A(t)\hX(t)+B(t)u(t)+a(t)]dt + \sum_{i=1}^d [\D_i(t)+C_i(t)] d\check{V}_i(t).
\end{equation*}
Recall from \autoref{lmm:hatV-BM} that the processes $\check{V}_i$; $1\le i\le d$ are independent Brownian motions.
We obtain by applying It\^{o}'s formula to $t\mapsto\lan P(t)\hX(t),\hX(t)\ran$ that
\begin{align*}
& \dbE\lan G\hX(T),\hX(T)\ran - \lan P(0)x,x\ran \\
&\q= \dbE\int_0^T\[\lan\dot P\hX,\hX\ran + 2\lan P\hX,A\hX+Bu+a\ran + \sum_{i=1}^d\lan P(\D_i+C_i),\D_i+C_i\ran\]dt.
\end{align*}
Furthermore, applying It\^{o}'s formula to $t\mapsto\lan \f(t),\hX(t)\ran$ yields
\begin{align*}
& \dbE\lan g,\hX(T)\ran-\lan\f(0),x\ran
    = \dbE\int_0^T\[\lan\dot\f,\hX\ran + \lan \f,A\hX+Bu+a\ran \]dt.
\end{align*}
Substituting for $\dbE\lan G\hX(T),\hX(T)\ran$ and $\dbE\lan g,\hX(T)\ran$ in the cost functional $\hJ(x;u)$,
then using \rf{Ric:P} and \rf{ODE:phi} to simplify the computation, we obtain \rf{hJ=P+phi}.
\end{proof}

Next we prove \autoref{prop:tJ=}.

\begin{proof}[\textbf{Proof of \autoref{prop:tJ=}.}]
Recall \rf{notation} and \rf{cA}--\rf{ODE:pi}, and note that the SDE \rf{SDE:tX} can be written as
\begin{equation}\label{SDE:tX-plus}\left\{\begin{aligned}
 d\tX(t) &= \cA(t)\tX(t) dt - \sum_{i=1}^d\D_i(t)dW_i(t) + \sum_{i=1}^k D_i(t)dW'_i(t), \\
  \tX(0) &= 0.
\end{aligned}\right.\end{equation}
Applying It\^{o}'s formula to $t\mapsto\lan\varPi(t)\tX(t),\tX(t)\ran$ yields
\begin{align}\label{tJ:sub1}
& \dbE\lan G\tX(T),\tX(T)\ran \nn\\
&\q= \dbE\int_0^T\[\lan\dot\varPi\tX,\tX\ran + 2\lan\varPi\tX,\cA\tX\ran +\sum_{i=1}^d\lan\varPi\D_i,\D_i\ran
     + \sum_{i=1}^k\lan\varPi D_i,D_i\ran \]dt \nn\\
&\q= \dbE\int_0^T\[\sum_{i=1}^d\lan\varPi\D_i,\D_i\ran + \sum_{i=1}^k\lan\varPi D_i,D_i\ran - \lan Q\tX,\tX\ran\]dt.
\end{align}
Applying It\^{o}'s formula to $t\mapsto\lan\pi(t),\tX(t)\ran$ yields
\begin{align}\label{tJ:sub2}
\dbE\lan g,\tX(T)\ran &= \dbE\int_0^T\[\lan\dot\pi,\tX\ran + \lan \pi,\cA\tX\ran \]dt = -\dbE\int_0^T \lan q,\tX\ran dt.
\end{align}
Substitution of \rf{tJ:sub1} and \rf{tJ:sub2} into \rf{def:tJ} results in \rf{tJ=}.
\end{proof}

Finally, we present the proof of \autoref{thm:opt-u}.

\begin{proof}[\textbf{Proof of \autoref{thm:opt-u}.}]
For a fixed admissible control $u\in\cU_{ad}$, let $X$ be the corresponding state process
with initial state $x$ and $Y$ the observation process.
By \autoref{thm:SDE-hX}, the $\{\sY_t^u\}$-optional projection $\hX$ of $X$ evolves according to the SDE \rf{SDE:hX},
and by \autoref{crllry:SDE-tX}, the difference process $\tX$ evolves according to the SDE \rf{SDE:tX}.
The cost functional can be written as the sum of $\hJ(x;u)$ and $\tJ$, given by \rf{def:hJ} and \rf{def:tJ}, respectively.
Observe that the process $\tX$ does not depend on the choice of $u\in\cU_{ad}$, and hence neither does $\tJ$.
So Problem (O) is equivalent to finding a control $v\in\sY^v$ such that $\hJ(x;u)$ is minimized over $u\in\cU_{ad}$.
On the other hand, according to \autoref{prop:hJ=},
\begin{align*}
\hJ(x;u)
&\ge \lan P(0)x,x\ran+2\lan\f(0),x\ran +\dbE\int_0^T\bigg[\sum_{i=1}^d\lan P(\D_i+C_i),\D_i+C_i\ran\nn\\
&\hp{=\ }-\lan R^{-1}(B^\top\f+r),B^\top\f+r\ran + 2\lan\f,a\ran\bigg]dt,
\end{align*}
with equality if and only if
$$ u(t) = \Th(t)\hX(t)-R(t)^{-1}[B(t)^\top\f(t)+r(t)]. $$
This means that the admissible control given by \rf{opt:formula} is optimal.
Substituting \rf{opt:formula} into the filtering equation \rf{SDE:hX} yields \rf{optimal:hX}.
Finally, adding $\hJ(x;u^*)$ and $\tJ$, we get the optimal value \rf{opt:value}.
\end{proof}

\section{Conclusion remarks}\label{sec:conclusion}

We have studied a class of linear-quadratic optimal control problems for partially observable dynamical systems.
Without imposing additional requirements on the admissible control, we showed that the optimal control is given
by a feedback representation via the filtering process and obtained the optimal value explicitly.
Our method is based on the orthogonal decomposition of the state process, which allows us to write the cost
functional as the sum of two independent parts: one depends only on the control and the filtering process
and the other is a functional of the estimate error independent of the choice of the control.
An important feature making our approach work is that the diffusion of the state equation does not involve
the state and the control.
With this feature, our idea could apply to more general models, for example, the mean-field model, the backward
optimal control problem for partially observable dynamical systems, and the optimal control problem of FBSDEs
with partial information.
We hope to report some results relevant to these problems in our future publications.

\end{document}